\definecolor{ForestGreen}{rgb}{0.15,0.416,0.18}
\definecolor{EgyptBlue}{rgb}{0.063,0.2,0.65}
\newtheorem{theorem}{Theorem}[section]
\newtheorem{corollary}[theorem]{Corollary}
\newtheorem{lemma}[theorem]{Lemma}
\newtheorem{proposition}[theorem]{Proposition}
\theoremstyle{definition}
\newtheorem{definition}[theorem]{Definition}
\theoremstyle{definition}
\newtheorem{remark}[theorem]{Remark}
\theoremstyle{definition}
\newtheorem{example}[theorem]{Example}
\numberwithin{equation}{section}
\numberwithin{table}{section}
\numberwithin{figure}{section}
\title{Sequences of contractions on cone metric spaces over Banach algebras and applications
to nonlinear systems of equations and systems of differential equations}
\author{\textbf{Cristian Daniel Alecsa}\footnote{Email: cristian.alecsa@math.ubbcluj.ro, cristian.alecsa@ictp.acad.ro}}
\affil{Babe\c s-Bolyai University, Mihail Kog\u alniceanu street no. 1, Cluj-Napoca RO-400084, Romania\\
Tiberiu Popoviciu Institute of Numerical Analysis, F\^ ant\^ anele street no. 57, Cluj-Napoca RO--400320, Romania}
\renewcommand{\maketitle}{\bgroup\setlength{\parindent}{0pt}

\vspace{1truecm}
\begin{center}{\vbox{\titlefont\@title}}\end{center}
\vspace{0.5truecm}
\begin{center}{\@author} \end{center}

\egroup
}
\renewcommand{\@fnsymbol}[1]{%
    \ifcase#1 \or {\,\Letter\!} \or\textasteriskcentered\or \textasteriskcentered\textasteriskcentered 
    \else\@ctrerr\fi}
\newcommand{\hbibitem}[4]{\bibitem{#1}{#2}
\def\@tempa{#3}%
\def\@tempb{#4}%
\ifx\@tempa\@empty\ifx\@tempb\@empty{}{}\else{}{\url{https://doi.org/#4}}\fi\else
{\href{http://www.ams.org/mathscinet-getitem?mr=#3}{MR#3}}\ifx\@tempb\@empty{}\else{; \url{https://doi.org/#4}}\fi\fi}
\newcommand*{\titlefont}{\fontsize{18}{21.6}\selectfont\textbf}
\renewcommand\@author{\ifx\AB@affillist\AB@empty\AB@author\else
      \ifnum\value{affil}>\value{Maxaffil}\def\rlap##1{##1}%
    \AB@authlist\\[\affilsep]\vbox{\AB@affillist}
    \else  \AB@authors\fi\fi}
\begin{document}

\maketitle

\pagestyle{plain}

\begin{center}
\noindent
\begin{minipage}{0.85\textwidth}\parindent=15.5pt

\medskip


{\small{
\noindent {\bf Abstract.} It is well known that fixed point problems of contractive-type mappings defined on cone metric spaces over Banach algebras are not equivalent to those in usual metric spaces (see \cite{RadenovicNEW} and \cite{LiuXu}). In this framework, the novelty of the present paper represents the development of some fixed point results regarding sequences of contractions in the setting of cone metric spaces over Banach algebras. Furthermore, some examples are given in order to strengthen our new concepts. Also, based on the powerful notion of a cone metric space over a Banach algebra, we present important applications to systems of differential equations and coupled functional equations, respectively, that are linked to the concept of sequences of contractions.}
\smallskip

\noindent {\bf{Keywords:}} Banach algebras; (G)-convergence; (H)-convergence; differential equations; fixed points; sequeneces of contractions.
\smallskip

\noindent{\bf{2010 Mathematics Subject Classification:}} 54H25, 47H10, 47H09, 46J10.
}

\end{minipage}
\end{center}


\section{Terminology and preliminary concepts}

In the present research article we try to tackle the convergence of sequences of contractions defined on cone metric spaces over Banach algebras. First of all we need to recall that F.F. Bonsall \cite{Bonsall} and S.B. Nadler Jr. \cite{Nadler} studied some stability results regarding sequences of contractions  defined on a whole metric space $(X,d)$. Furthermore, an interesting extension of the previous results was made by M. P\u acurar \cite{Pacurar}, who developed some fixed point results for the convergence of the sequence of fixed points of almost contractions. M. P\u acurar presented two interesting theorems, the first one regarding the pointwise convergence and the second one concerning uniform convergence of a sequence of almost contractions defined by the same coefficients. Now, our second aim of the present section is to remind some mathematical notions that are well established in the field of nonlinear analysis. For more information regarding these concepts, we kindly refer to \cite{Bonsall} and \cite{Pacurar}. We first present the idea of pointwise convergence.
\begin{definition}\label{D1.1} 
Let $(X,d)$ be a metric space. Also, let $T : X \to X$ and $T_{n} : X \to X$ be some given mappings for each $n \in \mathbb{N}$. By definition, the sequence $(T_{n})_{n \in \mathbb{N}}$ converges pointwise to $T$ on $X$, briefly $T_{n} \xrightarrow{p} T$, if for each $\varepsilon > 0$ and for every $x \in X$, there exists $N=N(\varepsilon,x) > 0$, such that for each $n \geq N$, we have that $d(T_{n}x,Tx) < \varepsilon.$
\end{definition}
We easily observe that in Definition \ref{D1.1}, one can replace the strict inequality $d(T_n x,T x) < \varepsilon$ by the non-strict inequality without changing the idea behind the concept of pointwise convergence. \\
Similarly, the particular notion of uniform convergence of a sequence of mappings is given as follows.
\begin{definition}\label{D1.2} 
Let $(X,d)$ be a metric space. Also, let $T : X \to X$ and $T_{n} : X \to X$ be some given mappings for each $n \in \mathbb{N}$. By definition, the sequence $(T_{n})_{n \in \mathbb{N}}$ converges uniformly to $T$ on $X$, briefly $T_{n} \xrightarrow{u} T$, if for each $\varepsilon > 0$, there exists $N=N(\varepsilon) > 0$, such that for each $n \geq N$ and for every $x \in X$, one has the following : $d(T_{n} x,T x) < \varepsilon.$
\end{definition}
Also, for a family of mappings we can briefly recall the fundamental notions of equicontinuity and uniform equicontinuity, respectively.
\begin{definition}\label{D1.3} 
Let $(X,d)$ be a metric space and $T_{n} : X \to X$ be some given mappings, for every $n \in \mathbb{N}$. The family $(T_{n})_{n \in \mathbb{N}}$ is called equicontinuous if and only if for every $\varepsilon > 0$ and for each $x \in X$, there exists $\delta=\delta(\varepsilon,x)>0$, such that for every $y \in X$ satisfying $d(x,y) < \delta$, one has that $d(T_{n} x,T_{n} y) < \varepsilon.$
\end{definition}
Now, regarding uniform equicontinuity of a family of operators, we employ the following definition.
\begin{definition}\label{D1.4} 
Let $(X,d)$ be a metric space and $T_{n} : X \to X$ be some given mappings, for every $n \in \mathbb{N}$. The family $(T_{n})_{n \in \mathbb{N}}$ is called uniformly equicontinuous if and only if for every $\varepsilon > 0$, there exists $\delta=\delta(\varepsilon)>0$, such that for every $x$ and $y$ in $X$, satisfying $d(x,y) < \delta$, one has that $d(T_{n} x,T_{n} y) < \varepsilon.$
\end{definition}
As before, one can easily replace the strict inequality with the non-strict one, such that the two definitions are equivalent to each other. \\
Now, it is time to remind that the starting point of the present research article is the paper of L. Barbet and K. Nachi. According to \cite{BarbetNachi}, the authors considered some fixed point results regarding the convergence of fixed points of contraction mappings in the regular setting of a metric space $(X,d)$. The novelty of the already mentioned paper consists on redefining pointwise and uniform convergence, respectively, but for operators defined on subsets of the whole space and not on the entire metric space $(X,d)$. Pointwise convergence was generalized by $G$-convergence and uniform convergence was extended as $H$-convergence. For the sake of completeness, we recall these two notions here.
\begin{definition}\label{D1.5} 
Let $(X,d)$ be a metric space and $X_{n}$ be nonempty subsets of $X$, for each $n \in \mathbb{N}$. Let $T_{n} : X_{n} \to X$ for every $n \in \mathbb{N}$ and $T_{\infty} : X_{\infty} \to X$ be some given mappings. By definition $T_{\infty}$ is the $G$-limit mapping of the sequence $(T_{n})_{n \in \mathbb{N}}$, whenever $(T_{n})_{n \in \mathbb{N}}$ satisfies property $(G)$, i.e.
\begin{align*}
(G): \, \forall x \in X_{\infty}, \ \exists (x_{n})_{n \in \mathbb{N}} \in \prod\limits_{n \in \mathbb{N}} X_{n}, \text{ s.t. } x_{n} \to x \text{ and } T_{n}x_{n} \to T_{\infty}x. 
\end{align*}
\end{definition}
Regarding the generalization of uniform convergence for mappings that are not defined on the whole metric space, we remind the following concept from \cite{BarbetNachi}.
\begin{definition}\label{D1.6} 
Let $(X,d)$ be a metric space and $X_{n}$ be nonempty subsets of $X$, for each $n \in \mathbb{N}$. Let $T_{n} : X_{n} \to X$ for every $n \in \mathbb{N}$ and $T_{\infty} : X_{\infty} \to X$ be some given mappings. By definition $T_{\infty}$ is the $H$-limit mapping of the sequence $(T_{n})_{n \in \mathbb{N}}$, whenever $(T_{n})_{n \in \mathbb{N}}$ satisfies property $(H)$, i.e.
\begin{align*}
(H): \, \forall (x_{n})_{n \in \mathbb{N}} \in \prod\limits_{n \in \mathbb{N}} X_{n}, \exists (y_{n})_{n \in \mathbb{N}} \subset X_{\infty} ,\text{ s.t. } d(x_{n},y_{n}) \to 0 \text{ and } d(T_{n}x_{n},T_{\infty}y_{n}) \to 0. 
\end{align*}
\end{definition}
Now, since we have reminded the basic concepts crucially important in our fixed point analysis, we make the following remark that in Theorem 2 from \cite{BarbetNachi} and in Theorem 1 from \cite{Nadler}, the authors considered the contractions to be defined on a metric space and on subset of a metric space, respectively. Moreover, they have supposed that the contractions have at least a fixed point. On the other hand, M. P\u acurar in \cite{Pacurar}
considered that the almost contractions were defined on a complete metric space and, in this case, each of them have a unique fixed point. For this, see This means that in our case it is of no importance if we consider or not the completeness of the cone metric space over the given Banach algebra.
Similarly, in [Theorem 2] of Nadler's article, that author considered the pointwise convergence of a sequence of fixed points under the assumption that the contractions are defined on a locally compact metric space $(X,d)$. Additionally, in \cite{Pacurar}, M. P\u acurar extended this result for the case of almost contractions that are defined on a complete metric space, because these mappings are not continuous so it is not properly to talk about the equicontinuity of a family of almost contractions. For this, see the observation made by M. P\u acurar before Theorem 2.6 in \cite{Pacurar}. So, in our framework of a cone metric space over a Banach algebra, it is of no loss to employ the analysis of M. P\u acurar when dealing with the completeness of such a space. Finally, for other interesting results concerning the stability of fixed points in $2$-metric spaces, stability of fixed points for sequences of $(\psi,\phi)$-weakly contractive mappings and mappings defined on an usual metric space, we let the reader follow \cite{Mishra1}, \cite{Mishra2} and \cite{SinghRusell}, respectively. \\
Now, it is time to move our focus to some articles regarding fixed point results in the setting of cone metric spaces over Banach algebras. It is well known that the fixed point theorems of contractive-type mappings defined on cone metric spaces are similar to those of the usual metric spaces, if the underlying cone is normal. These type of fixed point results were introduced in \cite{CONE}. On the other hand, H. Liu and S. Xu \cite{LiuXu} introduced the concept of cone metric spaces with Banach algebras in order to study fixed point results, replacing Banach spaces by Banach algebras and they gave an example in order to show that the fixed point results defined on this kind of spaces are non-equivalent to that of usual metric spaces. Furthermore, S. Xu and S. Radenovi\'c \cite{XuRadenovic} considered mappings defined on cone metric spaces over Banach algebras but one solid cones, without the usual assumption of normality. An interesting generalization was made by H. Huang and S. Radenovi\'c \cite{HuangRadenovic}, considering cone $b$-metric spaces over Banach algebras. They have studied common fixed points of generalized Lipschitz mappings. Also, P. Yan et. al. \cite{Yan} developed coupled fixed point theorems for mappings in the setting of cone metric spaces. Finally, the idea of replacing the Banach space by a Banach algebra was motivated by \cite{Jankovic} and \cite{Khamsi} in which some remarks about the connection between fixed point theorems for different mappings and in the case of usual normal cones of Banach spaces and usual metric spaces was given. Recently, in \cite{RadenovicNEW}, Huang et. al. studied some topological properties regarding cone metric spaces over Banach algebras. Also, they have studied some key concepts like T-stability and well-posedness regarding fixed point problems in these abstract spaces. \\
Now, at the end of this section, we are ready to review some necessary concepts and theorems regarding cone metric spaces over Banach algebras. \\
Now, considering $\mathcal{A}$ to be a Banach algebra with zero element $\theta \in \mathcal{A}$ and unit element $e \in \mathcal{A}$,we recall the notion of a cone from \cite{LiHuang}.
\begin{definition}\label{D1.7} 
A nonempty closed subset $P$ of $\mathcal{A}$ is called a cone if the following conditions hold :
\begin{align*}
& (P1) \quad \theta \text{ and } e \text{ are in } P, \\
& (P2) \quad \alpha P + \beta P \subset P, \text{ for every } \alpha,\beta \geq 0, \\
& (P3) \quad P^{2} \subseteq P, \\
& (P4) \quad P \cap (-P) = \lbrace \theta \rbrace.
\end{align*}
\end{definition}
Furthermore, we recall that $P$ is called a solid cone if $int(P) \neq \emptyset$, where $int(P)$ represent the topological interior of the set $P$. Now, as in \cite{HuangRadenovic}, one can define a partial ordering $\preceq$ with respect to the cone $P$, such as if $x$ and $y$ are in $\mathcal{A}$, then $x \preceq y$ if and only if $y-x \in P$. Also, we shall write $x \prec y$ in order to specify that $x \neq y$ and $x \preceq y$. At the same time, for $x,y \in \mathcal{A}$, we denote by $x \ll y$ the fact that $y - x \in int(P)$, based on the assumption that we will always suppose that the cone $P$ is solid. \\
From Definition 1.6 of \cite{LiHuang} and Definition 1.1 of \cite{LiuXu}, we introduce the well-known cone metric distances over the Banach algebra $\mathcal{A}$ and present some useful terminologies.
\begin{definition}\label{D1.8}
Let $X$ be a nonempty set and $d : X \times X \to \mathcal{A}$ be a mapping that satisfies the following conditions :
\begin{align*}
& (D1) \quad \theta \preceq d(x,y), \text{ for each } x,y \in X, \text{ and } d(x,y) = \theta \text{ if and only if } x = y, \\
& (D2) \quad d(x,y) = d(y,x), \text{ for each } x,y \in X, \\
& (D3) \quad d(x,y) \preceq d(x,z) + d(z,y), \text{ for every } x,y,z \in X. 
\end{align*}
Then $(X,d)$ is called a cone metric space over the Banach algebra $\mathcal{A}$.
\end{definition}
Furthermore, from \cite{XuRadenovic}, we recall the following concepts.
\begin{definition}\label{D1.9}
Let $(X,d)$ be a complete cone metric space over the Banach algebra $\mathcal{A}$. Also, let $x$ be an element of $X$ and $(x_{n})_{n \in \mathbb{N}} \subset X$ be given. Then, we have the following :
\begin{align*}
& (i) \quad (x_{n})_{n \in \mathbb{N}} \text{ converges to x, briefly } \lim\limits_{n \to \infty} x_{n} = x, \text{ if for every } c \gg \theta, \exists N = N(c) > 0, \\
& \text{ such that } d(x_{n},x) \ll c, \, \forall n \geq N. \\ 
& (ii) \quad (x_{n})_{n \in \mathbb{N}} \text{ is a Cauchy sequence }, \text{ if for every } c \gg \theta, \exists N = N(c) > 0, \\
& \text{ such that } d(x_{n},x_{m}) \ll c, \, \forall n,m \geq N. \\
& (iii) \quad (X,d) \text{ is complete if each Cauchy sequence is convergent}. 
\end{align*}
\end{definition}
In Definition \ref{D1.9}, $c \gg \theta$ represent an useful notation for $\theta \ll c$, so it lies no confusion in the rest of the present article.
Now, following the well-known Rudin's book of Functional Analysis \cite{Rudin}, for the sake of completeness, we recall the idea of the spectral radius of an element of the Banach algebra $\mathcal{A}$.
\begin{lemma}\label{L1.10}
Let $k \in \mathcal{A}$ be a given element. Then, by definition we consider the spectral radius of $k$, by 
$$ \rho(k) = \lim\limits_{n \to \infty} \| k^n \|^{\frac{1}{n}} = \inf\limits_{n \geq 1} \limits \| k^n \|^{\frac{1}{n}}. $$
If $\lambda \in \mathbb{C}$ and $\rho(k) < |\lambda|$, then the element $\lambda e - k $ is invertible. Also, one has that :
$$ (\lambda e - k )^{-1} = \sum\limits_{i=0}^{\infty} \dfrac{k^i}{\lambda^{i+1}}.$$
\end{lemma}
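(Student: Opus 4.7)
The statement has two essentially independent parts: the Gelfand-type formula $\rho(k)=\lim_n\|k^n\|^{1/n}=\inf_n\|k^n\|^{1/n}$, and the Neumann series representation of $(\lambda e-k)^{-1}$. I would handle them in that order, since the second one uses the first.

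For the formula $\lim_n\|k^n\|^{1/n}=\inf_n\|k^n\|^{1/n}$, the plan is to invoke Fekete's subadditive lemma. The submultiplicativity of the norm in the Banach algebra $\mathcal{A}$ gives $\|k^{m+n}\|\le\|k^m\|\,\|k^n\|$, so the sequence $a_n:=\log\|k^n\|$ is subadditive (adopting the convention $\log 0=-\infty$ if some $k^n$ vanishes, which only makes the conclusion stronger). Fekete's lemma then yields $\lim_{n}a_n/n=\inf_{n\geq 1}a_n/n$, and exponentiation gives exactly $\lim_n\|k^n\|^{1/n}=\inf_{n\geq 1}\|k^n\|^{1/n}$. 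The existence of the limit (and the non-obvious equality with the infimum, since $n\mapsto\|k^n\|^{1/n}$ need not be monotone) is the whole content of this step; I expect this to be the main obstacle, although Fekete's lemma makes it a very short argument once invoked.

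For the invertibility part, fix $\lambda\in\mathbb{C}$ with $|\lambda|>\rho(k)$ and set $S_N:=\sum_{i=0}^{N}\lambda^{-(i+1)}k^i$. By the spectral radius formula just proved, $\|k^i\|^{1/i}\to\rho(k)$, and so $\limsup_{i\to\infty}\|\lambda^{-(i+1)}k^i\|^{1/i}=\rho(k)/|\lambda|<1$. The root test therefore shows that $(S_N)_{N\in\mathbb{N}}$ is a Cauchy sequence in $\mathcal{A}$; since $\mathcal{A}$ is a Banach algebra it converges to some $S\in\mathcal{A}$, and this $S$ is precisely the series $\sum_{i=0}^{\infty}k^i/\lambda^{i+1}$ that appears in the statement.

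It remains to check that $S=(\lambda e-k)^{-1}$. Here I would compute both $(\lambda e-k)S_N$ and $S_N(\lambda e-k)$ directly: each is telescoping and equals $e-\lambda^{-(N+1)}k^{N+1}$. Since $\|\lambda^{-(N+1)}k^{N+1}\|\leq|\lambda|^{-(N+1)}\|k^{N+1}\|\to 0$ (again from $\rho(k)/|\lambda|<1$), passing to the limit in $N$ and using continuity of multiplication in $\mathcal{A}$ gives $(\lambda e-k)S=S(\lambda e-k)=e$, so $\lambda e-k$ is invertible with inverse $S=\sum_{i=0}^{\infty}k^i/\lambda^{i+1}$, as claimed.
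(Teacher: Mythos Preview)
Your proof is correct and follows the standard route: Fekete's subadditive lemma for the spectral radius formula, the root test for absolute convergence of the Neumann series, and the telescoping identity $(\lambda e-k)S_N=e-\lambda^{-(N+1)}k^{N+1}$ to verify that the limit is a two-sided inverse. Each step is carried out properly.

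However, there is nothing to compare against: the paper does not prove this lemma at all. It is introduced with the words ``following the well-known Rudin's book of Functional Analysis \ldots\ we recall the idea of the spectral radius,'' and no proof environment follows; the text moves directly to Definition~1.11. In other words, Lemma~1.10 is treated as a classical fact imported from \cite{Rudin}, not as a result the authors establish. Your write-up is essentially the textbook argument (and indeed the one in Rudin), so it is entirely appropriate as a supplied proof, but be aware that the paper itself simply cites the result rather than proving it.
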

Now, from \cite{LiHuang}, we present some important properties regarding the spectral radius of an element of a Banach algebra $\mathcal{A}$ and some notions concerning the idea of a $c$-sequence, respectively. 
\begin{definition}\label{D1.11}
A sequence $(d_{n})_{n \in \mathbb{N}}$ from a Banach algebra $\mathcal{A}$ endowed with a solid cone $P$ is called a $c$-sequence if and only if for every $c \gg \theta$, there exists $N=N(c) \in \mathbb{N}$, for which one has $d_{n} \ll c$, for each $n > N$.
\end{definition}
Alternatively, it is easy to see that it is of no loss if we take $n \geq N$ in the above definition. Moreover, one can use, as in the case of an usual metric space, alternative definitions such as the Proposition 3.2 from \cite{XuRadenovic} when the sequence $(d_n)_{n \in \mathbb{N}}$ is from $P$. Also, we remind the fact that one can rewrite the definition of convergent sequences and Cauchy sequences respectively, using the Definition \ref{D1.11} and Definition 1.8 from \cite{LiHuang}. \\
Furthermore, we have the following properties that can be put together in a single lemma. Regarding these properties, one can follow \cite{HuangRadenovic}, \cite{Jankovic}, \cite{LiHuang} and \cite{XuRadenovic}.
\begin{lemma}\label{L1.12}
Consider $\mathcal{A}$ be a Banach algebra. Then, we have the following :
\begin{align*}
& (1) \quad \text{ if } u \preceq v \ll w \text{ or } u \ll v \preceq w, \text{ then } u \ll w, \\
& (2) \quad \text{ if } \theta \preceq u \ll c, \text{ for every } c \gg \theta, \text{ then } u = \theta,  \\
& (3) \quad \text{ if P is a cone,} (u_{n})_{n \in \mathbb{N}}, (v_{n})_{n \in \mathbb{N}} \text{ are two c-sequences in } \mathcal{A} \text{ and } \\
& \alpha,\beta \text{ are in P, then } (\alpha u_{n} + \beta v_{n})_{n \in \mathbb{N}} \text{ is also a c-sequence}, \\
& (4) \quad \text{ if P is a cone and } k \in P \text{ with } \rho(k) < 1, \text{ then } ((k)^{n})_{n \in \mathbb{N}} \text{ is a c-sequence}, \\
& (5) \quad \text{ if } k \in P, k \succeq \theta, \text{ wih } \rho(k) < 1, \text{ then } (e-k)^{-1} \succeq \theta.
\end{align*} 
\end{lemma}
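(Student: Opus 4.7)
\textbf{Proof plan for Lemma~\ref{L1.12}.} The plan is to verify the five items in turn, all of which follow from topological properties of the closed solid cone $P$ together with the algebraic axioms $(P1)$--$(P4)$. A single tool threads through the arguments: the inclusion $\operatorname{int}(P) + P \subseteq \operatorname{int}(P)$, which holds because $B(v,r) \subseteq P$ and $y \in P$ give $B(v+y,r) = B(v,r) + y \subseteq P + P \subseteq P$, and any open set contained in $P$ sits inside $\operatorname{int}(P)$.

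For $(1)$, I would decompose $w - u = (w - v) + (v - u)$ and apply the inclusion above, handling the two orderings $u \preceq v \ll w$ and $u \ll v \preceq w$ symmetrically. For $(2)$, I would fix any $c_0 \gg \theta$ and put $c_n := c_0/n$, observing that $c_n \in \operatorname{int}(P)$ because $\operatorname{int}(P)$ is invariant under positive scaling (from $(P2)$); by hypothesis $c_n - u \in P$ for every $n$, and passing to the norm-limit $n\to\infty$ yields $-u \in P$ since $P$ is closed, so $u \in P \cap (-P) = \{\theta\}$ by $(P4)$.

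Item $(3)$ will be the main step, split into additivity and scaling. For additivity, given $c \gg \theta$, I would write $c = c/2 + c/2$, apply the $c$-sequence property of $(u_n)$ and $(v_n)$ at $c/2 \gg \theta$, and combine via $\operatorname{int}(P) + \operatorname{int}(P) \subseteq \operatorname{int}(P)$. For scaling by $\alpha \in P$, given $c \gg \theta$ I would pick $\delta > 0$ with $B(c,\delta) \subseteq P$ and then a real $\lambda > 0$ small enough that $\|\lambda \alpha c\| < \delta$, so that $c - \lambda \alpha c \in B(c,\delta) \subseteq \operatorname{int}(P)$; this says $\alpha c' \ll c$ for $c' := \lambda c \gg \theta$. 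The $c$-sequence property of $(u_n)$ then supplies $c' - u_n \in \operatorname{int}(P) \subseteq P$ for $n$ large, and the decomposition
\[
c - \alpha u_n = (c - \alpha c') + \alpha(c' - u_n) \in \operatorname{int}(P) + P^2 \subseteq \operatorname{int}(P) + P \subseteq \operatorname{int}(P)
\]
yields $\alpha u_n \ll c$ eventually. Treating $\beta v_n$ identically and combining with the additive step completes $(3)$.

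For $(4)$, the Gelfand formula in Lemma~\ref{L1.10} gives $\|k^n\|^{1/n} \to \rho(k) < 1$, hence $\|k^n\| \to 0$; since $k^n \in P$ by iteration of $(P3)$, the neighborhood argument used in $(3)$ (pick $\delta$ with $B(c,\delta) \subseteq P$ and $n$ large enough that $\|k^n\| < \delta$) delivers $c - k^n \in \operatorname{int}(P)$. For $(5)$, Lemma~\ref{L1.10} at $\lambda = 1$ produces the Neumann representation $(e-k)^{-1} = \sum_{i=0}^{\infty} k^i$; every partial sum lies in $P$ by $(P1)$--$(P3)$, and the series limit stays in $P$ because $P$ is closed. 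The principal obstacle is the scaling step of $(3)$: because $\alpha$ is a cone element rather than a positive scalar, one cannot simply divide $c$ by $\alpha$, and the auxiliary element $c' \gg \theta$ with $\alpha c' \ll c$ must be produced from the solid-cone topology of $\mathcal{A}$ rather than by algebraic manipulation alone.
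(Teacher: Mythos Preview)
Your proof plan is correct and self-contained. The paper itself does not prove Lemma~\ref{L1.12}: it assembles these five properties from the literature and simply refers the reader to \cite{HuangRadenovic}, \cite{Jankovic}, \cite{LiHuang} and \cite{XuRadenovic} for the arguments. So there is no ``paper's own proof'' to compare against beyond those citations.

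What you have written is essentially the standard route taken in those references. The key device---the inclusion $\operatorname{int}(P)+P\subseteq\operatorname{int}(P)$---is exactly the lemma that drives items~(1), (3) and (4) in the cited works, and your treatment of~(5) via the Neumann series and closedness of $P$ is the argument the paper itself sketches later (e.g.\ inside the proofs of Theorems~\ref{T2.10} and~\ref{T3.6}, where it writes $(e-\alpha)^{-1}=\sum_{i\ge 0}\alpha^i$ and notes each $\alpha^i\in P$). The one place your plan adds genuine content beyond what the paper records is the scaling half of~(3): producing an auxiliary $c'=\lambda c\gg\theta$ with $\alpha c'\ll c$ from the open ball $B(c,\delta)\subseteq P$ is the right idea when $\alpha$ is a cone element rather than a scalar, and it is the step that the cited sources (e.g.\ Proposition~3.3 in \cite{XuRadenovic}) carry out in the same spirit. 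In short, your argument matches the literature the paper defers to; nothing is missing.
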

On the other hand, we end this section by reminding the readers that for interesting examples of complete cone metric spaces over Banach algebras and for useful applications to functional and integral equations, we refer to \cite{HuangRadenovic}, \cite{HuangRadenovic2}, \cite{LiHuang}, and \cite{Yan}. Last, but not least, if $T$ is an operator, then by $F_T$ we denote the set of fixed points of the mapping $T$. \\
Finally, since our aim is to use the fixed point techniques in order to develop applications that have a meaningful connection with nonlinear systems of functional and differential equations, we kindly refer to \cite{HuangRadenovic} and \cite{LiHuang} for some important applications to nonlinear differential problems through fixed point results.

\section{Sequences of contractions on cone metric spaces over Banach algebras}

In the present section, we consider $\mathcal{A}$ to be a Banach algebra and $P$ to be the underlying solid cone. Our aim is to adapt in a natural way the concepts of pointwise and uniform convergence and the notions of equicontinuity for a family of mappings, respectively. First of all, we consider the definition of pointwise convergence in the framework of a cone metric space over the given Banach algebra $\mathcal{A}$.
\begin{definition}\label{D2.1} 
Let $(X,d)$ be a cone metric space over the Banach algebra $\mathcal{A}$. Also, let $T : X \to X$ and $T_{n} : X \to X$ be some given mappings for each $n \in \mathbb{N}$. By definition, the sequence $(T_{n})_{n \in \mathbb{N}}$ converges pointwise to $T$ on $X$, briefly $T_{n} \xrightarrow{p} T$, if for each $c \gg \theta$, $c \in \mathcal{A}$ and for every $x \in (X,d)$, there exists $N > 0$ that dependens on $c$ and $x$, such that for each $n \geq N$, we have that $d(T_{n} x,T x) \ll c$.
\end{definition}
In a similar way, the particular notion of uniform convergence of a sequence of mappings can be constructed as follows.
\begin{definition}\label{D2.2} 
Let $(X,d)$ be a cone metric space over the Banach algebra $\mathcal{A}$. Also, let $T : X \to X$ and $T_{n} : X \to X$ be some given mappings for each $n \in \mathbb{N}$. By definition, the sequence $(T_{n})_{n \in \mathbb{N}}$ converges uniformly to $T$ on $X$, briefly $T_{n} \xrightarrow{u} T$, if for each $c \gg \theta$, $c \in \mathcal{A}$, there exists $N > 0$ that depedens only on $c$, such that for each $n \geq N$ and for every $x \in (X,d)$, one has the following : $d(T_{n} x ,T x) \ll c.$
\end{definition}
On the other hand, for a family of mappings defined on a cone metric spaces over $\mathcal{A}$, we introduce the fundamental notions of equicontinuity and uniformly equicontinuity, respectively.
\begin{definition}\label{D2.3} 
Let $(X,d)$ be a cone metric space over the Banach algebra $\mathcal{A}$ and $T_{n} : X \to X$ be some given mappings, for every $n \in \mathbb{N}$. The family $(T_{n})_{n \in \mathbb{N}}$ is called equicontinuous if and only if for every $ c_{1} \gg \theta$, $c_{1} \in \mathcal{A}$ and for each $x \in (X,d)$, there exists $ c_{2} \gg \theta$, $c_{2} \in \mathcal{A}$ that depends on $c_{1}$ and $x$, such that for every $y \in (X,d)$ satisfying $d(x,y) \ll c_{2}$, one has that $d(T_{n} x ,T_{n} y) \ll c_{1}$, for every $n \in \mathbb{N}$.
\end{definition}
\begin{definition}\label{D2.4} 
Let $(X,d)$ be a cone metric space over the Banach algebra $\mathcal{A}$ and $T_{n} : X \to X$ be some given mappings, for every $n \in \mathbb{N}$. The family $(T_{n})_{n \in \mathbb{N}}$ is called uniformly equicontinuous if and only if for every $ c_{1} \gg \theta$, $c_{1} \in \mathcal{A}$, there exists $ c_{2} \gg \theta$, $c_{2} \in \mathcal{A}$ that depends only on $c_{1}$, such that for every $x$ and $y$ in $(X,d)$ with $d(x,y) \ll c_{2}$, one has that $d(T_{n} x,T_{n} y) \ll c_{1}$, for every $n \in \mathbb{N}$.
\end{definition}
Inspired by [Example 2.17] of \cite{HuangRadenovic} in which the authors presented a complete cone $b$-metric space over a Banach algebra with coefficient $s=2$, we are ready to present a modified version in which we have an usual complete metric space over a Banach algebra.
\begin{example}\label{E2.5}
Let's consider $\mathcal{A}$ to be set of all the matrices of the form 
$ \begin{pmatrix}
\alpha & \beta \\
0 & \alpha
\end{pmatrix}, $
where $\alpha$ and $\beta$ are from $\mathbb{R}$. On $\mathcal{A}$, we define a norm $\| \cdot \|$, such as for every matrix from $\mathcal{A}$, one has that 
$\Big\| \begin{pmatrix}
\alpha & \beta \\
0 & \alpha
\end{pmatrix}
\Big\| = |\alpha| + |\beta|$.  Also, on $\mathcal{A}$ we have the usual matrix multiplication. Moreover, one can see that 
$P = \Big\lbrace
\begin{pmatrix}
\alpha & \beta \\
0 & \alpha
\end{pmatrix} \, / \, \alpha,\beta \geq 0
\Big\rbrace$
is a nonempty solid cone on $\mathcal{A}$. Furthermore, one can verify that $\mathcal{A}$ is a Banach algebra. For the sake of completeness, we verify that the well-know triangle inequality holds under multiplication. That means that we verify that $\| A \cdot B \| \leq \| A \| \cdot \| B \|$, for every matrices $A$ and $B$, i.e. when 
$A = \begin{pmatrix}
\alpha_{1} & \beta_{1} \\
0 & \alpha_{1}
\end{pmatrix}$
and
$B = \begin{pmatrix}
\alpha_{2} & \beta_{2} \\
0 & \alpha_{2}
\end{pmatrix}$. It follows that $\| A \cdot B \| = |\alpha_{1}\alpha_{2}| + |\alpha_{1}\beta_{2} + \alpha_{2}\beta_{1}| \leq |\alpha_{1}\alpha_{2}| + |\alpha_{1}\beta_{2}| + |\alpha_{2}\beta_{1}|$. At the same time, it follows that $\| A \| \cdot \| B \| = |\alpha_{1}\alpha_{2}| + |\alpha_{1}\beta_{2}| + |\alpha_{2}\beta_{1}| + |\beta_{1}\beta_{2}|$. From all of this, it is easy to see that $\| A \cdot B \| \leq | A \| \cdot \| B \|$. \\
Now, we consider $X = [0,1]$ and define $d : X \times X \to \mathcal{A}$, such as for every $x,y \in X$, we have 
$d(x,y) = \begin{pmatrix}
|x-y| & k \cdot |x-y| \\
0 & |x-y|
\end{pmatrix},$
where $k \geq 1$. Now, we shall validate the fact that $d$ is indeed a cone metric over the given Banach algebra $\mathcal{A}$ with the identity element $e$ to be the identity matrix $I_{2}$ and the zero element $\theta$ to be the matrix with all elements $0$. For this, we consider $x,y$ and $z$ to be arbitrary elements of $X$. 
\begin{equation*}
\begin{split}
& \bullet \, \text{ We easily observe that } d(x,y) = d(y,x), \\
& \bullet \,\, d(x,y) \succeq \theta \Leftrightarrow \begin{pmatrix}
|x-y| & k \cdot |x-y| \\
0 & |x-y|
\end{pmatrix} \succeq  \begin{pmatrix}
0 & 0 \\
0 & 0
\end{pmatrix} \\
& \Leftrightarrow
\begin{pmatrix}
|x-y| & k \cdot |x-y| \\
0 & |x-y|
\end{pmatrix} -  \begin{pmatrix}
0 & 0 \\
0 & 0
\end{pmatrix} \in P \\
& \Leftrightarrow
\begin{pmatrix}
|x-y| & k \cdot |x-y| \\
0 & |x-y|
\end{pmatrix} \in P \\
& \Leftrightarrow |x-y| \geq 0.
\end{split}
\end{equation*}
\begin{equation*}
\begin{split}
\bullet \, & \text{ Taking } A:=d(x,y), B:=d(x,z) \text{ and } C:=d(z,y), \\
& \text{ we shall show that } A \leq B + C, \text{ i.e. } B+C-A \in P.
\end{split}
\end{equation*}
\begin{equation*}
\begin{split}
& \text{ This means that : } \\
& \begin{pmatrix}
|x-z| + |z-y| & k \cdot \left[ |x-z| + |z-y| \right] \\
0 & |x-z| + |z-y|
\end{pmatrix} - \begin{pmatrix}
|x-y| & k \cdot |x-y| \\
0 & |x-y|
\end{pmatrix} \in P \\
& \Longleftrightarrow
\begin{cases}
|x-z| + |z-y| \geq |x-y| \\
k \cdot \left[ |x-z| + |z-y| \right] \geq k \cdot |x-y|
\end{cases}
, \text{ which is valid.}
\end{split}
\end{equation*}
\end{example} 
Now, based on the Example \ref{E2.5}, we shall present also an example, in which we have the uniform convergence of a sequence of mappings defined on the previous cone metric space $(X,d)$ over the Banach algebra $\mathcal{A}$ given above. \\
Moreover, from now on we specify that the notation $\lim\limits_{\substack{n \to \infty \\ (\mathcal{A})}} x_{n} = \theta$ means the convergence under the Banach algebra $\mathcal{A}$, i.e. $(x_{n})_{n \in \mathbb{N}}$ is a given sequence that satisfies the fact that is a $c$-sequence. Furthermore, for a real given sequence $(y_{n})_{n \in \mathbb{N}}$ that converges to a real number $y$, we denote $\lim\limits_{\substack{n \to \infty \\ (\mathbb{R})}} y_{n} = y$. Finally, we make the observation that if we work with sequences of mappings, the latter covergence can be understood pointwise or uniformly, depending on the given context.
\begin{example}\label{E2.6}
For every $n \in \mathbb{N}$, let $f_{n} : [0,1] \to [0,1]$, such as $f_{n}(x) = \dfrac{x}{n}$, for each $x \in [0,1]$. Also, consider $f$ to be the null mapping from $[0,1]$ to $\lbrace 0 \rbrace$. Step by step, we show that $f_{n} \xrightarrow{u} f$ with respect to the cone metric $d$ from Example \ref{E2.5}. This means that for every $c \gg \theta$, there exists $N=N(c)>0$, such that for all $n \geq N$ and for each $x \in (X,d)$, we have that $d(f_{n} x,f x) \ll c$. \\
Furthermore, we observe that
$d(f_{n} x,f x) = d \left( \dfrac{x}{n},0 \right) =
\begin{pmatrix}
\Big| \dfrac{x}{n}-0 \Big| & k \cdot \Big| \dfrac{x}{n}-0 \Big| \\
0 & \Big| \dfrac{x}{n}-0 \Big|
\end{pmatrix}$. 
Furthermore, since the null element $\theta$ is the null matrix, we obviously have that
$\begin{pmatrix}
0 & 0 \\
0 & 0
\end{pmatrix} \preceq 
\begin{pmatrix}
\dfrac{x}{n} & k \cdot \dfrac{x}{n} \\
0 & \dfrac{x}{n}
\end{pmatrix}.$
First of all, we show that
$\begin{pmatrix}
\dfrac{x}{n} & k \cdot \dfrac{x}{n} \\
0 & \dfrac{x}{n}
\end{pmatrix} \preceq
\begin{pmatrix}
\dfrac{1}{n} & k \cdot \dfrac{1}{n} \\
0 & \dfrac{1}{n}
\end{pmatrix}$.
Equivalently, this means that
$\begin{pmatrix}
\dfrac{1}{n} & k \cdot \dfrac{1}{n} \\
0 & \dfrac{1}{n}
\end{pmatrix} - 
\begin{pmatrix}
\dfrac{x}{n} & k \cdot \dfrac{x}{n} \\
0 & \dfrac{x}{n}
\end{pmatrix} \in P$.
Since $\dfrac{1-x}{n} \geq 0$, because $x \leq 1$, then the above relation is valid. \\
Denoting by $A_{n} := \begin{pmatrix}
\dfrac{1}{n} & k \cdot \dfrac{1}{n} \\
0 & \dfrac{1}{n}
\end{pmatrix}$, 
we shall show that $\lim\limits_{\substack{n \to \infty \\ (\mathcal{A})}} A_{n} = \theta$. This means that $(A_{n})_{n \in \mathbb{N}}$ is a $c$-sequence, i.e. for an arbitrary $c \gg \theta$, $c \in \mathcal{A}$, there exists $N > 0$ that depends on $c$, such that for every $n \geq N$, it follows that $A_{n} \ll c$.
For this, let's consider $c \in int(P)$. Since $c$ is arbitrary, we can freely choose $c = \begin{pmatrix}
\alpha & \beta \\
0 & \alpha
\end{pmatrix}$, with $\alpha$, $\beta > 0$. We must show that there exists an index $N > 0$ that depends on $c$, i.e. $N$ depends on $\alpha$ and $\beta$, such that for all $n \geq N$, it must follow that
$\begin{pmatrix}
\dfrac{1}{n} & k \cdot \dfrac{1}{n} \\
0 & \dfrac{1}{n}
\end{pmatrix} \ll
\begin{pmatrix}
\alpha & \beta \\
0 & \alpha
\end{pmatrix}.$
This is the same as 
$\begin{pmatrix}
\alpha - \dfrac{1}{n} & \beta - k \cdot \dfrac{1}{n} \\
0 & \alpha - \dfrac{1}{n}
\end{pmatrix} \in int(P)$,
i.e. 
$\begin{cases}
\alpha > \dfrac{1}{n}, \\
\beta > k \cdot \dfrac{1}{n}.
\end{cases}$
We know that $\lim\limits_{\substack{n \to \infty \\ (\mathbb{R})}} \dfrac{1}{n} = 0$ and also $\lim\limits_{\substack{n \to \infty \\ (\mathbb{R})}} \dfrac{k}{n} = 0$. From the first limit, it follows that for the above $\alpha > 0$, there exists $N_{1}$ that depends on $\alpha$, such that for every $n \geq N_{1}$, one has that $\dfrac{1}{n} < \alpha$. Now since $n \geq N_{1}$, it follows that there indeed exists $N_{1} := \left[ \dfrac{1}{\alpha} \right] + 1$, such as $\dfrac{1}{n} \leq \dfrac{1}{N_{1}} < \alpha$.
Now, for the second limit and for the above $\beta > 0$, there exists $N_{2}$ that depends solely on $\beta$, such that for every $n \geq N_{2}$, one has that $\dfrac{k}{n} < \beta$. Now since $n \geq N_{2}$, we get that there exists $N_{2} := \left[ \dfrac{k}{\beta} \right] + 1$, such as $\dfrac{k}{n} \leq \dfrac{k}{N_{2}} < \beta$. In our analysis, we recall that the notation with the square brackets means the well-known integer part of a given number. \\
From all of this, we can find $N := \max \lbrace N_{1},N_{2} \rbrace$ that obviously depends on $\alpha$ and $\beta$, i.e. depends on $c$, such that $\alpha > \dfrac{1}{n}$ and $\beta > \dfrac{k}{n}$, respectively. This means that $(A_{n})_{n \in \mathbb{N}}$ is a $c$-sequence. So, it implies that for an arbitrary element $c \gg \theta$, $c \in \mathcal{A}$, there exists $N=N(c) > 0$, such that for every $n \in \mathbb{N}$, we have that $ A_{n} \ll c$. Using the fact that $d(f_{n} x,f x) \preceq A_{n}$ for each $x \in [0,1]$ and using $(1)$ of Lemma \ref{L1.12}, we get the desired conclusion.
\end{example}
From \cite{LiHuang}, we recall an example of a cone metric space over a Banach algebra, which will be used further in this paper.
\begin{example}\label{E2.7}
Let $\mathcal{A} = \mathbb{R}^2$. Then $\mathcal{A}$ is a Banach algebra, with the norm given by $\| (u_{1},u_{2}) \| = |u_{1}| + |u_{2}|$, for any arbitrary element $(u_{1},u_{2})$ of $\mathcal{A}$. Moreover, we have the multiplication $u \cdot v  = (u_{1}v_{1},u_{1}v_{2}+u_{2}v_{1})$, where $u=(u_{1},u_{2})$ and $v=(v_{1},v_{2})$ are given elements. Also $P = \lbrace u=(u_{1},u_{2}) \, / \, u_{1},u_{2} \geq 0 \rbrace$ is a solid cone over $\mathbb{R}^{2}$. Taking $\tilde{X} = \mathbb{R}^2$, we can define the operator $d : \tilde{X} \times \tilde{X} \to \mathcal{A}$, by $d(x,y) = (|x_1-y_1|,|x_2-y_2|)$, where $x=(x_{1},x_{2})$ and $y=(y_{1},y_{2})$. Then $(\tilde{X},d)$ is a cone metric space over $\mathbb{R}^2$.
\end{example}
We mention that if we take $X = [0,1) \times [0,1) \subset \tilde{X}$, then it is easy to see that $(X,d)$ is also a cone metric space over $\mathbb{R}^2$, where $d$ is defined in Example \ref{E2.7}. Also, based on the previous example, we shall present a sequence of mappings that converges pointwise and does not converge uniformly toward the null mapping, with respect to the cone metric $d$.
\begin{example}\label{E2.8}
For every $n \in \mathbb{N}$, let $T_{n} : [0,1) \times [0,1) \to [0,1) \times [0,1)$, defined as $T_{n}(x) = (x_{1}^{n^{2}},x_{2}^{n})$, where $x=(x_{1},x_{2}) \in [0,1)^2$. Also, we consider the null operator $T$, i.e. $T(x) = (0,0)$, where $x \in [0,1)^2$ and $T: [0,1) \times [0,1) \to \lbrace 0 \rbrace \times \lbrace 0 \rbrace \subset [0,1) \times [0,1)$. In the present example, we shall show that $T_{n} \xrightarrow{p} T$, but $T_{n} \not\xrightarrow{u} T$. \\
$\bullet$ First of all, we shall show that the sequence $(T_{n})_{n \in \mathbb{N}}$ converges pointwise to $T$ with respect to $d$. This means that for an arbitrary $c \gg \theta$, $c \in \mathcal{R}^2$ and for every $x \in [0,1)^2$, we must find an index $N > 0$ that depends on $c$ and $x$, such that for all $n \geq N$, one has $d(T_{n} x,T x) \ll c$. For this, let's consider $c = c(c_{1},c_{2})$, with $c_{1},c_{2} > 0$. Also, let $x$ to be the pair $(x_{1},x_{2})$, such that $x_{1},x_{2} \in [0,1)$. Then, it follows that :
\begin{equation*}
\begin{split}
& d(T_{n} x,T x) \ll c \\
& \Leftrightarrow (c_1,c_2) - d(T_{n} x ,T x) \in int(P) \\
& \Leftrightarrow (c_1,c_2) - d \left( \left( x_{1}^{n^{2}}, x_{2}^{n} \right) , \left( 0,0 \right) \right) \in int(P) \\
& \Leftrightarrow 
\begin{cases}
c_{1} - x_{1}^{n^{2}} > 0 \\
c_{2} - x_{2}^{n} > 0
\end{cases}
\Leftrightarrow
\begin{cases}
c_{1} > x_{1}^{n^{2}} \\
c_{2} > x_{2}^{n}.
\end{cases}
\end{split}
\end{equation*}
Now, we shall use the fact that $\lim\limits_{\substack{n \to \infty \\ (\mathbb{R})}} x_{1}^{n^{2}} = 0$ and also $\lim\limits_{\substack{n \to \infty \\ (\mathbb{R})}} x_{2}^{n} = 0$, i.e. the functions $f_{n}(x_{1}) = x_{1}^{n^{2}}$ and $g_{n}(x_{2})=x_{2}^{n}$ coverge pointwise toward $0$.
From the first limit, it follows that for $c_{1} > 0$ considered above, there exists $N_{1} = N_{1}(c_{1},x_{1}) > 0$, such that for each $n \geq N_{1}$, we have that $x_{1}^{n^{2}} \leq x_{1}^{N_{1}^{2}} < c_{1}$. Analogous, for the second limit and for $c_{2} > 0$ considered above, there exists $N_{2} = N_{2}(c_{2},x_{2}) > 0$, such that for each $n \geq N_{2}$, one has that $x_{2}^{n} \leq x_{2}^{N_{2}} < c_{2}$. Also, we mention that $N_{1}$ and $N_{2}$ ca be formally determined as in Example \ref{E2.6}. \\
From all of this, we find $N = \max \lbrace N_{1},N_{2} \rbrace$, that depends on $c_{1},c_{2},x_{1}$ and $x_{2}$ and so depend on $c$ and $x$, for which we have $d(T_{n} x,T x) \ll c$, for every $n \geq N$. \\
$\bullet$ Now, it is time to show that $(T_{n})_{n \in \mathbb{N}}$ does not converge uniformly to $T$ with respect to $d$. We know that if $T_{n} \xrightarrow{u} T$, then for every $c \gg \theta$, $c \in \mathcal{A}$, there exists $N=N(c)>0$, such that for every $n \geq N$, one has $d(T_{n} x,T x) \ll c$, for each $x \in [0,1)^2$. For this, let's consider $c=(c_{1},c_{2})$, with $c_{1},c_{2} > 0$. We know that $d(T_{n} x,T x) \ll c$ requires that $x_{1}^{n^{2}} < c_{1}$ and $x_{2}^{n} < c_{2}$ simulatenously. For all $n \in \mathbb{N}$, we can take the particular case when $x$ depends on $n$ and choose $x_{1}(n) = 5^{-1/n^2}$ and $x_{2}(n) = 3^{-1/n}$. In this manner, we obtain that $x_{1}^{n^{2}} = \dfrac{1}{5} < c_{1}$ and $x_{2}^{n} = \dfrac{1}{3} < c_{2}$. This leads to the fact that taking $c=(c_{1},c_{2})$, with $c_{1} \leq \dfrac{1}{5}$ and $c_{2} \leq \dfrac{1}{3}$, then $c \preceq \left( \dfrac{1}{5},\dfrac{1}{3} \right)$. So, for example, if we take $c = \left( \dfrac{1}{11},\dfrac{1}{8} \right)$, we get a contradiction.
\end{example}

Now, we are ready to present our main results, i.e. regarding the pointwise and uniform convergence respectively of a sequence of mappings with respect to a cone metric over a Banach algebra $\mathcal{A}$.
\begin{theorem}\label{T2.9}
Let $(X,d)$ be a cone metric space over a Banach algebra $\mathcal{A}$. Also, consider $T_{n},T : X \to X$, for each $n \in \mathbb{N}$ such that they satisfy the following assumptions : 
\begin{align*}
& (i) \ \text{for every } n \in \mathbb{N}, T_{n} \text{ has at least a fixed point, i.e. there exists } x_{n} \in T_{n}, \\    
& (ii) \ \text{ the operator T is an } \alpha-\text{contraction with respect to the cone metric } d, \text{ i.e. there exists } \alpha \in P, \\
& \text{ with } \rho(\alpha)<1, \text{ such that } d(T x,T y) \preceq \alpha d(x,y), \text{ for all } x,y \in X, \\
& (iii) T_{n} \xrightarrow{u} T \text{ as } n \to \infty, \text{ with respect to the cone metric}, \\
& (iv) (X,d) \text{ is a complete cone over the Banach algebra } \mathcal{A}.
\end{align*}
Then, following the fact that $x^{\ast}$ is the unique fixed point of the operator $T$, we have that $(d(x_{n},x^{\ast}))_{n \in \mathbb{N}}$ is a $c$-sequence.
\end{theorem}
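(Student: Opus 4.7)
The plan is to reduce the claim to a triangle-inequality estimate that isolates $d(x_n,x^{\ast})$ on one side. First, I would invoke the Banach-type contraction principle for cone metric spaces over Banach algebras (available in the references cited before Definition 1.7, e.g.\ \cite{LiuXu,XuRadenovic}) to guarantee that, since $T$ is an $\alpha$-contraction with $\rho(\alpha)<1$ and $(X,d)$ is complete, $T$ has a unique fixed point $x^{\ast}\in X$. This justifies the statement of the theorem and gives me the target $x^{\ast}$ against which to measure $x_n$.

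Next, using $T_n x_n = x_n$ and $T x^{\ast} = x^{\ast}$ together with axiom $(D3)$, I would write
\begin{equation*}
d(x_n,x^{\ast}) \;=\; d(T_n x_n, T x^{\ast}) \;\preceq\; d(T_n x_n, T x_n) + d(T x_n, T x^{\ast}) \;\preceq\; d(T_n x_n, T x_n) + \alpha\, d(x_n,x^{\ast}).
\end{equation*}
Bringing $\alpha\, d(x_n,x^{\ast})$ to the left yields $(e-\alpha)\, d(x_n,x^{\ast}) \preceq d(T_n x_n, T x_n)$. Because $\rho(\alpha)<1$, Lemma \ref{L1.10} asserts that $e-\alpha$ is invertible, and item $(5)$ of Lemma \ref{L1.12} asserts that $(e-\alpha)^{-1}\succeq\theta$. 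Multiplying both sides of the inequality by $(e-\alpha)^{-1}\in P$ therefore preserves $\preceq$, giving
\begin{equation*}
d(x_n,x^{\ast}) \;\preceq\; (e-\alpha)^{-1}\, d(T_n x_n, T x_n).
\end{equation*}

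The final step is to show that the right-hand side is a $c$-sequence. Since $T_n \xrightarrow{u} T$ on $X$, for any $c\gg\theta$ there exists $N=N(c)$ such that $d(T_n x, T x)\ll c$ for all $n\geq N$ and all $x\in X$; specializing to $x=x_n$ shows that $\bigl(d(T_n x_n, T x_n)\bigr)_{n\in\mathbb{N}}$ is a $c$-sequence. Applying item $(3)$ of Lemma \ref{L1.12} with the constant scalar $(e-\alpha)^{-1}\in P$ gives that $\bigl((e-\alpha)^{-1} d(T_n x_n, T x_n)\bigr)_{n\in\mathbb{N}}$ is again a $c$-sequence. Finally, item $(1)$ of Lemma \ref{L1.12} (applied pointwise in $n$) lets me conclude that $\bigl(d(x_n,x^{\ast})\bigr)_{n\in\mathbb{N}}$ itself is a $c$-sequence, which is exactly the desired conclusion.

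The only delicate point I anticipate is the algebraic step of multiplying by $(e-\alpha)^{-1}$ while preserving $\preceq$: this requires that $(e-\alpha)^{-1}$ lies in $P$ and that $P^{2}\subseteq P$ (axiom $(P3)$), so that $u\preceq v$ implies $(e-\alpha)^{-1}u \preceq (e-\alpha)^{-1}v$. Both facts are in place, so this reduces to a routine verification. Everything else is a direct application of the Banach-algebra machinery already recalled in Section 1, and no use of equicontinuity or of the hypotheses on the $T_n$ other than the existence of the fixed points $x_n$ and uniform convergence is needed.
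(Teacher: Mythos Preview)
Your proposal is correct and mirrors the paper's own argument almost line for line: the same triangle-inequality splitting through $Tx_n$, the same passage to $(e-\alpha)^{-1}$ via $\rho(\alpha)<1$ and $P^2\subseteq P$, and the same specialization of uniform convergence at $x=x_n$ to obtain the $c$-sequence. The only cosmetic difference is that the paper cites Proposition~3.3 of \cite{XuRadenovic} rather than item~(3) of Lemma~\ref{L1.12} for the stability of $c$-sequences under multiplication by $(e-\alpha)^{-1}\in P$.
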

\begin{proof}
Following \cite{XuRadenovic}, we know that there exists a unique fixed point of $T$, i.e. $x^{\ast} \in F_{T}$. Since we need a major bound for $d(x_{n},x^{\ast})$, we consider the following estimations : \\
\begin{equation*}
\begin{split}
d(x_{n},x^{\ast}) = & d(T_{n} x_{n},T x^{\ast}) \preceq \\
& d(T_{n} x_{n},T x_{n}) + d(T x_{n},T x^{\ast}) \preceq \\
& d(T_{n} x_{n},T x_{n}) + \alpha d(x_{n},x^{\ast}).
\end{split}
\end{equation*}
Using the idea of a solid cone in the Banach algebra $\mathcal{A}$, this leads to
\begin{equation*}
\begin{split}
& \alpha d(x_{n},x^{\ast}) + d(T_{n} x_{n},T x_{n}) - d(x_{n},x^{\ast}) \in P \Leftrightarrow \\
& (\alpha-e) d(x_{n},x^{\ast}) + d(T_{n} x_{n},T x_{n}) \in P \Leftrightarrow \\
& d(T_{n} x_{n},T x_{n}) - (e-\alpha) d(x_{n},x^{\ast}) \in P.
\end{split}
\end{equation*}
We specify that here we have used the fact that $(e-\alpha)$ is the opposite element of $(\alpha-e)$ in the setting of the given Banach algebra. Also. we know that $(e-\alpha)^{-1} \succeq \theta$ because $ \alpha \succeq \theta$ and that $(e-\alpha)^{-1}$ is well defined since $\rho(\alpha)<1$. Now, using the fact that $P^2 \subset P $ and multiplying by $(e-\alpha)^{-1}$, it follows that
\begin{equation*}
\begin{split}
& (e-\alpha)^{-1} d(T_{n} x_{n},T x_{n}) - d(x_{n},x^{\ast}) \in P \Leftrightarrow \\
& d(x_{n},x^{\ast}) \preceq (e-\alpha)^{-1} d(T_{n} x_{n},T x_{n}).
\end{split}    
\end{equation*}
Now, we need to show that
\begin{align*}
& \text{ for every } c \gg \theta, c \in \mathcal{A}, \text{ there exists } N_{1} \in \mathbb{N} \text{ that depends on } c, \\
& \text{ such that for every } n \geq N_{1}, \text{ one has } d(x_{n},x^{\ast}) \preceq c.
\end{align*}
For this, let's consider $c \in \mathcal{A}$, $c \gg \theta$ an arbitrary fixed element. We know that $T_{n} \xrightarrow{u} T$. This means that 
\begin{align*}
& \text{ for every } \bar{c} \gg \theta, \bar{c} \in \mathcal{A}, \text{ there exists } N_{2} \in \mathbb{N} \text{ that depends on } \bar{c}, \\
& \text{ such that for every } n \geq N_{2}, \text{ one has } d(T_{n}(x),T(x)) \ll \bar{c}, \text{ for all } x \in (X,d). 
\end{align*}
We know that $d(x_{n},x^{\ast}) \preceq (e-\alpha)^{-1} d(T_{n}(x_{n}),T(x_{n}))$. Also $(e-\alpha)^{-1} \in P$ because $\alpha \in P$. Furthermore $d(T_n x_n, T x_n) \in P \subset \mathcal{A}$, by Proposition 3.3 from \cite{XuRadenovic} and at the same time taking $x=x_n$ in the definition of uniform convergence, we get that $(d(T_n x_{n}, T x_n))_{n \in \mathbb{N}}$ is a c-sequence. This leads to the fact that $((e-\alpha)^{-1} d(T_n x_{n}, T x_n))_{n \in \mathbb{N}}$ is also a c-sequence. So, we obtain that :
\begin{align*}
& \text{ for } c \gg \theta, \ c \in \mathcal{A}, \text{ there exists } N_{2} = N_{2}(c), \text{ such that for all } n \geq N_{2}, \text{ we have } \\
& d(x_{n},x^{\ast}) \ll c.    
\end{align*}
Using (1) of Lemma \ref{L1.12}, the conclusion follows properly.
\end{proof}

Now we are ready to present our second crucial result concerning the pointwise convergence of a sequence of operators with respect to a given Banach algebra $\mathcal{A}$.
\begin{theorem}\label{T2.10}
Let $(X,d)$ be a cone metric space over a Banach algebra $\mathcal{A}$. Also, consider $T_{n},T : X \to X$, for each $n \in \mathbb{N}$ such that they satisfy the following assumptions : 
\begin{align*}
& (i) \ \text{ the operator } T_{n} \text{ is an } \alpha-\text{contraction with respect to the cone metric } d, \text{ i.e. there exists } \alpha \in P, \\
& \text{ with } \rho(\alpha)<1, \text{ such that } d(T_{n} x,T_{n} y) \preceq \alpha d(x,y), \text{ for all } x,y \in (X,d) \text{ and } n \in \mathbb{N}, \\
& (ii) \ \text{ the operator T is an } \alpha_{0}-\text{contraction with respect to the cone metric } d, \text{ i.e. there exists } \alpha_{0} \in P \\
& \text{ with } \rho(\alpha_{0})<1, \text{ such that } d(T x,T y) \preceq \alpha_{0} d(x,y), \text{ for all } x,y \in X, \\
& (iii) T_{n} \xrightarrow{p} T \text{ as } n \to \infty, \\
& (iv) (X,d) \text{ is a complete cone over the Banach algebra } \mathcal{A}.
\end{align*}
Then, following the fact that $x_{n}^{\ast}$ are the unique fixed points of the operators $T_{n}$, we have that $(d(x_{n}^{\ast},x^{\ast}))_{n \in \mathbb{N}}$ is a $c$-sequence.
\end{theorem}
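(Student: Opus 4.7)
The plan is to mirror the strategy of Theorem \ref{T2.9}, but to route the triangle-inequality decomposition through the point $x^{\ast}$ rather than $x_{n}$, because under mere pointwise convergence we may only evaluate $d(T_{n}x,Tx)$ at points $x$ that do not depend on $n$. First I would invoke the Banach-type contraction principle on cone metric spaces over Banach algebras from \cite{XuRadenovic}: since each $T_{n}$ is an $\alpha$-contraction on a complete space with $\rho(\alpha) < 1$, it has a unique fixed point $x_{n}^{\ast}$; likewise $T$ has a unique fixed point $x^{\ast}$, so all the objects in the statement are well defined.

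Next I would perform the core estimate
\begin{equation*}
d(x_{n}^{\ast},x^{\ast}) = d(T_{n}x_{n}^{\ast},Tx^{\ast}) \preceq d(T_{n}x_{n}^{\ast},T_{n}x^{\ast}) + d(T_{n}x^{\ast},Tx^{\ast}) \preceq \alpha\, d(x_{n}^{\ast},x^{\ast}) + d(T_{n}x^{\ast},Tx^{\ast}),
\end{equation*}
using that $T_{n}$ is an $\alpha$-contraction. Rearranging in $\mathcal{A}$, I would deduce $(e-\alpha)\,d(x_{n}^{\ast},x^{\ast}) \preceq d(T_{n}x^{\ast},Tx^{\ast})$, exactly as in the proof of Theorem \ref{T2.9}. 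Because $\rho(\alpha) < 1$, Lemma \ref{L1.10} guarantees that $(e-\alpha)^{-1}$ exists, and property (5) of Lemma \ref{L1.12} ensures $(e-\alpha)^{-1} \succeq \theta$. Multiplying by $(e-\alpha)^{-1}$ (and using $P^{2}\subseteq P$) yields
\begin{equation*}
d(x_{n}^{\ast},x^{\ast}) \preceq (e-\alpha)^{-1}\, d(T_{n}x^{\ast},Tx^{\ast}).
\end{equation*}

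Now the key step that distinguishes this proof from the uniform case: since $T_{n} \xrightarrow{p} T$, I would specialise the pointwise convergence at the single point $x^{\ast}$ to conclude that $(d(T_{n}x^{\ast},Tx^{\ast}))_{n\in\mathbb{N}}$ is a $c$-sequence. Multiplying a $c$-sequence by the fixed element $(e-\alpha)^{-1}\in P$ keeps it a $c$-sequence by property (3) of Lemma \ref{L1.12}, and then property (1) of Lemma \ref{L1.12} transfers this control to $d(x_{n}^{\ast},x^{\ast})$, giving the conclusion.

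The main obstacle, conceptually, is precisely the switch of the ``test point'' from $x_{n}$ (in Theorem \ref{T2.9}) to $x^{\ast}$ here: uniform convergence lets one feed a moving sequence into the bound $d(T_{n}\cdot,T\cdot)$, whereas pointwise convergence forces the argument to be a single, $n$-independent point, and the natural candidate is the fixed point of the limit operator $T$. Once that routing is set up, the algebraic manipulation inside $\mathcal{A}$ is routine and the $\alpha_{0}$-contractivity of $T$ is used only to secure existence and uniqueness of $x^{\ast}$, not in the estimate itself.
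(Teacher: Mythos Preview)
Your proposal is correct and follows essentially the same route as the paper's proof: the same triangle-inequality decomposition through $T_{n}x^{\ast}$, the same algebraic rearrangement and multiplication by $(e-\alpha)^{-1}\in P$, and the same specialisation of pointwise convergence at the single point $x^{\ast}$. The paper justifies $(e-\alpha)^{-1}\in P$ via the explicit series $\sum_{i\ge 0}\alpha^{i}$ rather than citing Lemma~\ref{L1.12}(5), but this is purely cosmetic.
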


\begin{proof}
From $(i)$ and $(iv)$, we obtain that for each $n \in \mathbb{N}$, there exists a unique fixed point of $T_{n}$, i.e. $x_{n}^{\ast} \in F_{T_{n}}$. Furthermore, from hyphotesis $(ii)$ and $(iv)$, it follows that there exists a unique fixed point of the operator $T$, namely $x^{\ast} \in F_{T}$. Now, in order to obtain some bounds on $d(x_{n}^{\ast},x^{\ast})$, we consider the following estimations :
\begin{equation*}
\begin{split}
d(x_{n}^{\ast},x^{\ast}) = & d(T_{n} x_{n}^{\ast},T x^{\ast}) \preceq \\
& d(T_{n} x_{n}^{\ast},T_{n} x^{\ast}) + d(T_{n} x^{\ast},T x^{\ast}) \preceq \\
& \alpha d(x_{n}^{\ast},x^{\ast}) + d(T_{n} x^{\ast},T x^{\ast}) \Leftrightarrow.
\end{split}
\end{equation*}
This leads to the following inequalities with respect to the solid cone $P$ of the Banach algebra $\mathcal{A}$ :
\begin{equation*}
\begin{split}
& d(T_{n} x^{\ast}, T x^{\ast}) + \alpha d(x_{n}^{\ast},x^{\ast}) - d(x_{n}^{\ast},x^{\ast}) \in P \Leftrightarrow \\
& d(T_{n} x^{\ast}, T x^{\ast}) + (\alpha - e ) d(x_{n}^{\ast},x^{\ast}) \in P \Leftrightarrow \\
& d(T_{n} x^{\ast}, T x^{\ast}) - (e-\alpha) d(x_{n}^{\ast},x^{\ast}) \in P.
\end{split}    
\end{equation*}
From $\alpha \in P$ and by the fact that $\rho(\alpha)<1$, it follows that there exist $(e-\alpha)^{-1} \in P$. Multiplying by $(e-\alpha)^{-1}$ and using the fact that $P^2 \subseteq P$, we have that
$$ (e-\alpha)^{-1} d(T_{n} x^{\ast}, T x^{\ast}) - d(x_{n}^{\ast},x^{\ast}) \in P.$$
We obtain the following : 
$$ d(x_{n}^{\ast},x^{\ast}) \preceq (e-\alpha)^{-1} d(T_{n} x^{\ast},T x^{\ast}).$$
Now, one can observe that $(e-\alpha)^{-1} = \sum\limits_{i=0}^{\infty} \alpha^i$. Since $\alpha$ and $e$ are in $P$, by induction one can prove that $\alpha^i \in P$, for every $i \geq 0$. So $(e-\alpha)^{-1} \in P$. Now, we want to show that :
\begin{align*}
& \text{ for every } c \gg \theta, \ c \in \mathcal{A}, \text{ there exists } N_{2} \in \mathbb{N} \text{ that depends on } c, \\
& \text{ such that for all } n \geq N_{2}, \text{ we have } d(x_{n}^{\ast},x^{\ast}) \preceq c.     
\end{align*}
Now, from the fact that $T_{n} \xrightarrow{p} T$, it follows that
\begin{align*}
& \text{ for every } \bar{c} \gg \theta, \ \bar{c} \in \mathcal{A} \text{ and for } x \in (X,d), \text{ there exists } N_{2} \in \mathbb{N} \text{ that depends on } \bar{c} \text{ and } x, \\
& \text{ such that for all } n \geq N_{2}, \text{ we have that } d(T_{n} x,T x) \ll \bar{c}.  
\end{align*}
Now, taking $x=x^{\ast}$ fixed, we get that $(d(T_n x^{\ast},T x^{\ast}))$ is a c-sequence. Also, since $(e-\alpha)^{-1} \in P$, by Proposition 3.3 of \cite{XuRadenovic}, it follows that $(d(x_{n}^{\ast},x^{\ast}))_{n \in \mathbb{N}}$ is also a c-sequence. This reasoning can be done as in the proof of Theorem \ref{T2.9} and this completes our proof. \\
We observe that $x^{\ast}$ is fixed from the beginning, so it does not influence the rank $N_{2}$ from the definition of a $c$-sequence. This means that our conclusion is well defined. Finally, as in Theorem \ref{T2.9}, using Proposition 3.2 of \cite{XuRadenovic}, it follows also that $d(x_{n}^{\ast},x^{\ast}) \ll c$ and the proof is over.
\end{proof}
Now, as in Theorem \ref{T2.9}, one can observe that we can use an equivalent definition of pointwise convergence using non-strict inequalities, and this does not influence the obtained results.

\section{(G)-convergence and (H)-convergence}

Following \cite{BarbetNachi}, our aim of the present section is to extend the concepts of (G)-convergence and (H)-convergence, respectively for sequences of operators that have different domains, from the case of usual metric spaces to the case of cone metric spaces over a Banach algebra. We begin by extending some notions regarding these two types of convergence from metric spaces to cone metric spaces. The first concept concerns an extension of the well-known pointwise convergence, but for operators that do not have the same domain of definition.

\begin{definition}\label{D3.1}
Let $X_n, X_\infty$ be subsets of $X$, where $(X,d)$ is a cone metric space (not necessarily complete) over a given Banach algebra $\mathcal{A}$. Also, let's consider for each $n \in \mathbb{N}$ some operators $T_n : X_n \to X$ and $T_\infty : X_\infty \to X$. By definition, $T_\infty$ is a (G)-limit of the sequence $(T_n)_{n \in \mathbb{N}}$, when the family of mappings $(T_n)_{n \in \mathbb{N}}$ satisfies the following property :
\begin{equation*}
\begin{split}
(G) : & \ \text{ for each } x \in X_\infty, \text{ there exists a sequence } (x_n)_{n \in \mathbb{N}}, \text{ with } x_n \in X_n \ (n \in \mathbb{N}), \text{ such that : } \\
& (d(x_n,x))_{n \in \mathbb{N}} \text{ is a c-sequence and } (d(T_n x_n,T_\infty x))_{n \in \mathbb{N}} \text{ is also a c-sequence}.    
\end{split}
\end{equation*}
\end{definition}
Now, the second definition of the present section concerns a generalization of the uniform convergence, but for mappings that do not have the same domain.

\begin{definition}\label{D3.2}
Let $X_n, X_\infty$ be subsets of $X$, where $(X,d)$ is a cone metric space (not necessarily complete) over a given Banach algebra $\mathcal{A}$. Also, let's consider for each $n \in \mathbb{N}$ some operators $T_n : X_n \to X$ and $T_\infty : X_\infty \to X$. By definition, $T_\infty$ is a (H)-limit of the sequence $(T_n)_{n \in \mathbb{N}}$, when the family of mappings $(T_n)_{n \in \mathbb{N}}$ satisfies the following property :
\begin{equation*}
\begin{split}
(H) : & \ \text{ for each sequence } (x_n)_{n \in \mathbb{N}}, \text{ with } x_n \in X_n, \text{ for every } n \in \mathbb{N} , \\
& \text{ there exists a sequence } (y_n)_{n \in \mathbb{N}} \subset X_\infty, \text{ such that : } \\
& (d(x_n,y_n))_{n \in \mathbb{N}} \text{ is a c-sequence and } (d(T_n x_n,T_\infty y_n))_{n \in \mathbb{N}} \text{ is also a c-sequence}.    
\end{split}
\end{equation*}
\end{definition}

Our first result from this section concerns the fact that the (H)-limit of a sequence of operators is also a (G)-limit, under suitable circumstances. Moreover, since we need the idea of continuity of an operator, we can employ two definitions : an extension of the definition of continuity from the case of metric spaces to the case of cone metric spaces over Banach algebras and the second one the idea of sequential continuity (for this see (iii) of Definition 2.1 from \cite{LiHuang}). Namely, we have the following remark.

\begin{remark}\label{R3.3}
If $(X,d)$ is a cone metric space over a Banach algebra $\mathcal{A}$, then : \\
a) An operator $T$ is continuous in $x_0 \in (X,d)$ if and only if for each $c \gg \theta$, $c \in \mathcal{A}$, there exists $\bar{c} \in P$ that depends on $c$, such that for every $x \in (X,d)$, satisfying $d(x,x_0) \ll \bar{c}$, one has that $d(T(x),T(x_0)) \ll c$. Moreover, the operator $T$ is continuous if it is continuous at every point of it's domain. \\
b) An operator $T$ is sequential continuous if for every sequence $(y_n)_{n \in \mathbb{N}}$ convergent to $x \in X$, i.e. satisfying $(d(y_n,x))_{n \in \mathbb{N}}$ is a c-sequence, then $(d(T y_n,T x))_{n \in \mathbb{N}}$ is also a c-sequence.
\end{remark}

\begin{proposition}\label{P3.4}
Let $(X,d)$ be a cone metric space over a given Banach algebra $\mathcal{A}$. Also, for each $n \in \mathbb{N}$, let $X_n$ be some nonempty subsets of $X$. Also, consider another nonempty subset of $X$, namely $X_\infty$. Furthermore, suppose that the following conditions are satisfied :
\begin{align*}
& (i) \ \text{ if } x \in X_\infty, \text{ then there exists } (x_{n})_{n \in \mathbb{N}}, \text{ with } x_n \in X_n \text{ for every } n \in \mathbb{N}, \\
& \text{ such that } (d(x_n,x))_{n \in \mathbb{N}} \text{ is a c-sequence}, \\
& (ii) \ T_\infty : X_\infty \to X \text{ is sequential continuous}, \\
& (iii) \ T_\infty \text{ is a (H)-limit for the family } (T_n).
\end{align*}
Then, $T_\infty$ is a (G)-limit for the family $(T_n)$. 
\end{proposition}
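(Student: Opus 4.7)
The plan is to show directly that the $(G)$-property holds, by starting from an arbitrary $x \in X_\infty$, constructing a candidate sequence via hypothesis $(i)$, and then upgrading it with help from hypotheses $(iii)$ and $(ii)$ so that both required $c$-sequence conditions hold simultaneously.

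First I would fix $x \in X_\infty$ arbitrarily. By hypothesis $(i)$, there exists a sequence $(x_n)_{n \in \mathbb{N}}$ with $x_n \in X_n$ such that $(d(x_n,x))_{n \in \mathbb{N}}$ is a $c$-sequence. This is our candidate; it already satisfies the first half of property $(G)$. The remaining task is to prove that $(d(T_n x_n, T_\infty x))_{n \in \mathbb{N}}$ is also a $c$-sequence.

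To do this, I would invoke hypothesis $(iii)$: since $T_\infty$ is an $(H)$-limit of $(T_n)$, applied to the particular sequence $(x_n)$ just constructed, there exists $(y_n)_{n \in \mathbb{N}} \subset X_\infty$ such that both $(d(x_n,y_n))_{n \in \mathbb{N}}$ and $(d(T_n x_n, T_\infty y_n))_{n \in \mathbb{N}}$ are $c$-sequences. I would then split via the triangle inequality
\[
d(T_n x_n, T_\infty x) \preceq d(T_n x_n, T_\infty y_n) + d(T_\infty y_n, T_\infty x).
\]
The first term on the right is already a $c$-sequence by construction. For the second term I would first show that $(d(y_n,x))_{n \in \mathbb{N}}$ is a $c$-sequence, by using the triangle inequality $d(y_n,x) \preceq d(y_n,x_n) + d(x_n,x)$ together with part $(3)$ of Lemma \ref{L1.12} (the sum of two $c$-sequences, with coefficients in $P$, is a $c$-sequence). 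Then sequential continuity of $T_\infty$ from hypothesis $(ii)$ (as formulated in Remark \ref{R3.3}) yields that $(d(T_\infty y_n, T_\infty x))_{n \in \mathbb{N}}$ is a $c$-sequence.

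Combining these via Lemma \ref{L1.12}$(3)$ once more gives that the upper bound is a $c$-sequence, and then a standard sandwich argument — identical to the one employed in the final step of the proof of Theorem \ref{T2.9}, relying on part $(1)$ of Lemma \ref{L1.12} — shows that $(d(T_n x_n, T_\infty x))_{n \in \mathbb{N}}$ itself is a $c$-sequence. Hence property $(G)$ holds and $T_\infty$ is a $(G)$-limit of $(T_n)$. I do not expect a genuine obstacle here: the proof is essentially a bookkeeping exercise, with the only delicate point being the care needed to transport the $c$-sequence property across $T_\infty$, which is exactly what the sequential continuity hypothesis $(ii)$ is built to provide.
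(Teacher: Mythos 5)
Your proposal is correct and follows essentially the same route as the paper's own proof: take the sequence from hypothesis (i), feed it into the (H)-property to obtain $(y_n)$, show $(d(y_n,x))_{n\in\mathbb{N}}$ is a $c$-sequence by the triangle inequality, transport this across $T_\infty$ via sequential continuity, and conclude with the triangle inequality $d(T_n x_n, T_\infty x) \preceq d(T_n x_n, T_\infty y_n) + d(T_\infty y_n, T_\infty x)$ together with Lemma \ref{L1.12}. No gaps.
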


\begin{proof}
Let $x \in X_\infty$. Then, from (i), we find a sequence $(x_n)_{n \in \mathbb{N}}$, with $x_n \in X_n$ for $n \in \mathbb{N}$, such that $(d(x_n,x))_{n \in \mathbb{N}}$ is a c-sequence. From (iii), we obtain that there exists a sequence $(y_n)_{n \in \mathbb{N}}$, such that $(d(y_n,x_n))_{n \in \mathbb{N}}$ and $(d(T_n x_n,T_\infty y_n))_{n \in \mathbb{N}}$ are c-sequences. At the same time, we need to show that for an arbitrary $x$ from $X_\infty$, there exists a sequence $(z_n)_{n \in \mathbb{N}}$, such that $(d(z_n,x))_{n \in \mathbb{N}}$ and $(d(T_n z_n,T_\infty x))_{n \in \mathbb{N}}$ are c-sequences. We shall show that $z_n = x_n$, for every $n \in \mathbb{N}$. So, we have that 
$$ d(y_n,x) \preceq d(y_n,x_n) + d(x_n,x). $$
Also, since $(d(y_n,x))_{n \in \mathbb{N}}$ and $(d(x_n,x))_{n \in \mathbb{N}}$ are c-sequences, then we obtain that the right hand side is also a c-sequence, i.e.
\begin{align*}
& \text{ for each } c \gg \theta, \ c \in \mathcal{A}, \text{ there exists } N = N(c) \in \mathbb{N}, \text{ such that for all } n \geq N, \\
& \text{ we have that } (d(y_n,x)) \preceq d(y_n,x_n) + d(x_n,x) \ll c, \text{ so } (d(y_n,x))_{n \in \mathbb{N}} \text{ is a c-sequence}.  
\end{align*}
From Remark \ref{R3.3}, since $(d(y_n,x))_{n \in \mathbb{N}}$ is a c-sequence, then it follows that $(d(T_\infty y_n,T_\infty x))_{n \in \mathbb{N}}$ is a c-sequence. Then, by aplying the triangle inequality in the setting of the cone metric space over $\mathcal{A}$, we obtain that
$$ d(T_n x_n, T_\infty x) \preceq d(T_n x_n, T_\infty y_n) + d(T_\infty y_n, T_\infty x). $$
Since the right hand side from above is a c-sequence, then the left hand side, namely \\ $(d(T_n x_n,T_\infty x))_{n \in \mathbb{N}}$ is also a c-sequence and the proof is done.
\end{proof}

Now, it is time to show that under certain assumptions the (G)-limit of a sequence of mappings is unique. We have the following result.

\begin{theorem}\label{T3.5}
Let $(X,d)$ be a cone metric space over a given Banach algebra $\mathcal{A}$. Also, consider $X_n$ (for every $n \in \mathbb{N}$) and $X_\infty$ be some nonempty subsets of $X$. Suppose that the following assumptions are satisfied : 
\begin{align*}
& (i) \ \text{ for all } n \in \mathbb{N}, \text{ let } T_n \text{ to be a k-Lipschitz with respect to the Banach algebra } \mathcal{A}, \text{ i.e. there exists } k \in P, \\ 
& \text{ such that } d(T_n(x),T_n(y)) \preceq k \cdot d(x,y), \text{ for each } x,y \in X_n \\
& (ii) \ T_\infty : X_\infty \to X \text{ is a (G)-limit for the family } (T_n).
\end{align*}
Then, $T_\infty$ is the unique (G)-limit on $X_\infty$.
\end{theorem}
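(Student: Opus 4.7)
The plan is to argue for uniqueness by assuming a second candidate $\tilde{T}_\infty : X_\infty \to X$ is also a (G)-limit of $(T_n)_{n \in \mathbb{N}}$ and showing that $d(T_\infty x, \tilde{T}_\infty x) = \theta$ for every $x \in X_\infty$. So I would fix an arbitrary $x \in X_\infty$ and apply the property $(G)$ twice: once to $T_\infty$ to produce a sequence $(x_n)_{n \in \mathbb{N}}$ with $x_n \in X_n$ such that both $(d(x_n,x))_{n \in \mathbb{N}}$ and $(d(T_n x_n, T_\infty x))_{n \in \mathbb{N}}$ are c-sequences, and once to $\tilde{T}_\infty$ to obtain another sequence $(y_n)_{n \in \mathbb{N}}$ with $y_n \in X_n$ such that $(d(y_n,x))_{n \in \mathbb{N}}$ and $(d(T_n y_n, \tilde{T}_\infty x))_{n \in \mathbb{N}}$ are c-sequences.

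The core computation is the triangle estimate
\begin{equation*}
d(T_\infty x, \tilde{T}_\infty x) \preceq d(T_\infty x, T_n x_n) + d(T_n x_n, T_n y_n) + d(T_n y_n, \tilde{T}_\infty x),
\end{equation*}
combined with the hypothesis that every $T_n$ is $k$-Lipschitz on $X_n$, which gives
\begin{equation*}
d(T_n x_n, T_n y_n) \preceq k \cdot d(x_n, y_n) \preceq k \cdot d(x_n, x) + k \cdot d(x, y_n).
\end{equation*}
Here I use that both $x_n, y_n \in X_n$, so the Lipschitz condition actually applies. Since $(d(x_n,x))_{n \in \mathbb{N}}$ and $(d(x,y_n))_{n \in \mathbb{N}}$ are c-sequences, the sum $(d(x_n,x) + d(x,y_n))_{n \in \mathbb{N}}$ is a c-sequence by (3) of Lemma \ref{L1.12} (taking the coefficients to be $e \in P$). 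Multiplying by $k \in P$, a second application of (3) of Lemma \ref{L1.12} shows that $(k \cdot d(x_n,x) + k \cdot d(x,y_n))_{n \in \mathbb{N}}$ is still a c-sequence. Adding the two outer terms from the triangle estimate, which are already c-sequences by hypothesis, and using (3) of Lemma \ref{L1.12} once more, I conclude that the right-hand side of the triangle estimate is a c-sequence.

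Hence, for every $c \gg \theta$ there exists $N \in \mathbb{N}$ such that, for all $n \geq N$, the right-hand side is $\ll c$, and by (1) of Lemma \ref{L1.12}, $d(T_\infty x, \tilde{T}_\infty x) \ll c$. Since $c \gg \theta$ was arbitrary, (2) of Lemma \ref{L1.12} yields $d(T_\infty x, \tilde{T}_\infty x) = \theta$, so $T_\infty x = \tilde{T}_\infty x$. As $x$ was arbitrary, $T_\infty \equiv \tilde{T}_\infty$ on $X_\infty$.

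The argument is essentially bookkeeping once the triangle estimate is set up, so I do not anticipate a genuine obstacle; the mildly delicate point is making sure that the $k$-Lipschitz bound is applied to $(x_n, y_n)$ (which is legitimate precisely because both sequences are chosen to lie in $X_n$) and then invoking (3) of Lemma \ref{L1.12} in the correct order — first to add the two c-sequences, then to multiply by $k \in P$ — so that the resulting bound is manifestly a c-sequence. Everything else reduces to the standard comparison lemmas already collected in Lemma \ref{L1.12}.
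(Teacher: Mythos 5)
Your proposal is correct and follows essentially the same route as the paper's proof: applying property (G) to both candidate limits to get sequences $(x_n)$ and $(y_n)$ in $X_n$, bounding $d(T_n x_n, T_n y_n)$ via the $k$-Lipschitz condition and the triangle inequality, and concluding with the c-sequence closure properties of Lemma \ref{L1.12} that $d(T_\infty x, \tilde{T}_\infty x) = \theta$. No gaps; the bookkeeping with parts (1)--(3) of Lemma \ref{L1.12} matches the paper's argument.
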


\begin{proof}
Let $T_\infty$ and $T_\infty^{\prime}$ be two (G)-limit mappings for the family $(T_n)$, defined on $X_\infty$. This means that for an arbitrary $x$ of $X_\infty$, there exists two sequences $(x_n)_{n \in \mathbb{N}}$ and $(y_n)_{n \in \mathbb{N}}$, with $x_n,y_n \in X_n$, such that :
\begin{align*}
& (d(x_n,x))_{n \in \mathbb{N}} \text{ and } (d(T_n x_n,T_\infty x))_{n \in \mathbb{N}} \text{ are c-sequences}, \\
& (d(y_n,x))_{n \in \mathbb{N}} \text{ and } (d(T_n y_n,T_\infty^{\prime} x))_{n \in \mathbb{N}} \text{ are also c-sequences}.
\end{align*}
Then, we obtain that
\begin{equation*}
\begin{split}
d(T_n x_n, T_n y_n) & \preceq k \cdot d(x_n,y_n) \\    
& \preceq k \left[ d(x_n,x) + d(y_n,x) \right] \\
& = k d(x_n,x) + k d(y_n,x).
\end{split}
\end{equation*}
Furthermore, since $(d(x_n,x))_{n \in \mathbb{N}}$ and $(d(y_n,x))_{n \in \mathbb{N}}$ are c-sequences, then it follows easily that $(k \cdot d(x_n,x))_{n \in \mathbb{N}}$ and $(k \cdot d(y_n,x))_{n \in \mathbb{N}}$ are also c-sequences. Then, by (3) of Lemma \ref{L1.12}, we get that $(k d(x_n,x)+k d(y_n,x))_{n \in \mathbb{N}}$ is also a c-sequence. This means that
\begin{align*}
& \text{for each arbitrary elements } c_1 \gg \theta, \ c_1 \in \mathcal{A}, \text{ there exists } N_1 \text{ that depends on } c_1, \\
& \text{ such that for every } n \geq N_1, \text{ one has that } k d(x_n,x) + k d(y_n,x) \ll c_1.      
\end{align*}
Then, for a fixed element $x \in X_\infty$, it follows that 
$$ d(T_\infty x, T_\infty^{\prime} x) \preceq d(T_\infty x, T_n x_n) + d(T_n x_n, T_n y_n) + d(T_n y_n, T_\infty^{\prime} x).$$
Since $(d(T_n x_n,T_\infty x)+d(T_n y_n,T_\infty^{\prime} x))_{n \in \mathbb{N}}$ is a c-sequence, it implies that for every $c_1 \gg \theta$, $c_1 \in \mathcal{A}$, there exists an index $N_1 = N_1(c_1) \in \mathbb{N}$, such that for every $n \geq N_1$, we have that $d(T_n x_n, T_n y_n) \preceq k d(x_n,x) + k d(y_n,x) \ll c_1$. Now, this implies that $(d(T_n x_n, T_n y_n))_{n \in \mathbb{N}}$ is a c-sequence. This leads to :
\begin{equation*}
\begin{split}
& (d(T_\infty x, T_n x_n) + d(T_n x_n, T_n y_n) + d(T_n y_n, T_\infty^{\prime} x))_{n \in \mathbb{N}} \text{ is a c-sequence, i.e. } \\
& \text{ for every } c \gg \theta,\ c \in \mathcal{A}, \text{ there exists an index } N = N(c) \in \mathbb{N}, \text{ such that for every } n \geq N, \\ 
& \text{ we have that } d(T_\infty x, T_\infty^{\prime} x) \preceq d(T_\infty x, T_n x_n) + d(T_n x_n, T_n y_n) + d(T_n y_n, T_\infty^{\prime} x) \ll c.
\end{split}    
\end{equation*}
Since for $c \gg \theta$, one has $0 \preceq d(T_\infty x, T_\infty^{\prime} x) \ll c$, following \cite{XuRadenovic} and (2) of Lemma \ref{L1.12}, we obtain that $d(T_\infty x, T_\infty^{\prime} x) = 0$, so the proof is over.
\end{proof}

Our third result from this section concerns the convergence of a sequence of fixed points of a family of mappings that has property (G), with respect to a given Banach algebra.

\begin{theorem}\label{T3.6}
Let $(X,d)$ be a cone metric space over a given Banach algebra $\mathcal{A}$. Also, consider $X_n$ (for $n \in \mathbb{N}$) and $X_\infty$ to be some nonempty subsets of $X$. Also, consider some mappings $T_n : X_n \to X$ and $T_\infty : X_\infty \to X$ that satisfy the following assumptions : 
\begin{align*}
& (i) \ \text{ for each } n \in \mathbb{N}, \ T_n \text{ is a k-contraction, i.e. there exists } k \in P \text{ with } \rho(k)<1, \\
& \text{ such that }  d(T_n(x),T_n(y)) \preceq k \cdot d(x,y), \text{ for each } x,y \in X_n, \\
& (ii) \ \text{ the family } (T_n) \text{ has property (G),} \\
& (iii) \ \text{ there exists } x_\infty \in F_{T_\infty}, \text{ i.e. } x_\infty \text{ is a fixed point of } T_\infty. 
\end{align*}
Then, $(d(x_n,x_\infty))_{n \in \mathbb{N}}$ is a c-sequence.
\end{theorem}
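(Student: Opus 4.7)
The plan is to mimic the strategy of the proofs of Theorems \ref{T2.9} and \ref{T2.10}, but with an auxiliary approximating sequence supplied by property $(G)$ to compensate for the fact that $T_n$ and $T_\infty$ do not share a common domain. First, I would apply property $(G)$ at the point $x_\infty \in X_\infty$ to obtain a sequence $(y_n)_{n \in \mathbb{N}}$ with $y_n \in X_n$ such that $(d(y_n, x_\infty))_{n \in \mathbb{N}}$ and $(d(T_n y_n, T_\infty x_\infty))_{n \in \mathbb{N}}$ are both c-sequences; since $T_\infty x_\infty = x_\infty$ by assumption (iii), the latter reads $(d(T_n y_n, x_\infty))_{n \in \mathbb{N}}$.

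Next, taking $x_n \in F_{T_n}$ (the fixed points of the contractions $T_n$ referenced in the conclusion), I would derive the fundamental bound by combining the triangle inequality with the $k$-contractivity of $T_n$:
\begin{equation*}
\begin{split}
d(x_n, x_\infty) &= d(T_n x_n, x_\infty) \preceq d(T_n x_n, T_n y_n) + d(T_n y_n, x_\infty) \\
&\preceq k\, d(x_n, y_n) + d(T_n y_n, x_\infty) \\
&\preceq k\, d(x_n, x_\infty) + k\, d(y_n, x_\infty) + d(T_n y_n, x_\infty).
\end{split}
\end{equation*}
Rearranging exactly as in Theorems \ref{T2.9} and \ref{T2.10} by unwinding $\preceq$ into membership in $P$, I would move $k\, d(x_n, x_\infty)$ to the left to obtain $(e-k)\, d(x_n, x_\infty) \preceq k\, d(y_n, x_\infty) + d(T_n y_n, x_\infty)$. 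By Lemma \ref{L1.12}(5), the inverse $(e-k)^{-1}$ exists and lies in $P$ since $k \in P$ and $\rho(k) < 1$. Multiplying through and using $P^2 \subseteq P$ would then produce
$$d(x_n, x_\infty) \preceq (e-k)^{-1}\bigl[\, k\, d(y_n, x_\infty) + d(T_n y_n, x_\infty)\,\bigr].$$

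Finally, since $(d(y_n, x_\infty))_{n \in \mathbb{N}}$ and $(d(T_n y_n, x_\infty))_{n \in \mathbb{N}}$ are c-sequences and both $k$ and $(e-k)^{-1}$ are in $P$, two applications of Lemma \ref{L1.12}(3) show that the right-hand side is a c-sequence, and Lemma \ref{L1.12}(1) then yields that $(d(x_n, x_\infty))_{n \in \mathbb{N}}$ itself is a c-sequence. The main technical subtlety is the algebraic rearrangement in the middle step: as in the earlier theorems, one must carefully translate the inequality between cone-metric distances into membership of $P$, confirm that left-multiplication by the nonnegative element $(e-k)^{-1}$ preserves the ordering (which is where $P^2 \subseteq P$ is used), and then convert back to the partial order. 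Once that manipulation is in place, the c-sequence conclusion is a routine consequence of the closure properties collected in Lemma \ref{L1.12}.
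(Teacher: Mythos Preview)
Your proposal is correct and follows essentially the same approach as the paper's own proof: apply property $(G)$ at $x_\infty$ to obtain the approximating sequence $(y_n)$, use the triangle inequality and the $k$-contractivity of $T_n$ to reach the inequality $d(x_n,x_\infty)\preceq k\,d(x_n,x_\infty)+k\,d(y_n,x_\infty)+d(T_n y_n,T_\infty x_\infty)$, then isolate $d(x_n,x_\infty)$ by multiplying through by $(e-k)^{-1}\in P$ and conclude via the c-sequence calculus of Lemma~\ref{L1.12}. The only cosmetic difference is that you write $d(T_n y_n,x_\infty)$ in place of $d(T_n y_n,T_\infty x_\infty)$, which is the same quantity since $x_\infty$ is a fixed point of $T_\infty$.
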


\begin{proof}
We know that $x_n = T_n(x_n)$ and that $x_\infty = T_\infty(x_\infty)$. Furthermore, since $T_\infty$ is a (G)-limit for the family $(T_n)$, then for an arbitrary element $x \in X_\infty$, there exists a sequence $(y_n)_{n \in \mathbb{N}}$, with $y_n \in X_n$ for each $n \in \mathbb{N}$, such that $(d(y_n,x))_{n \in \mathbb{N}}$ and $(d(T_n y_n, T_\infty x))_{n \in \mathbb{N}}$ are c-sequences. Moreover, taking $x = x_\infty$ we obtain that $(d(y_n,x_\infty))_{n \in \mathbb{N}}$ and $(d(T_n y_n, T_\infty x_\infty))_{n \in \mathbb{N}}$ are also c-sequences. Then, it follows that
\begin{equation*}
\begin{split}
d(x_n,x_\infty) & = d(T_n x_n, T_\infty x_\infty) \\
& \preceq d(T_n x_n, T_n y_n) + d(T_n y_n, T_\infty x_\infty) \\
& \preceq k \cdot d(x_n,y_n) + d(T_n y_n, T_\infty x_\infty) \\
& \preceq k \cdot d(x_n,x_\infty) + k \cdot d(y_n,x_\infty) + d(T_n y_n, T_\infty x_\infty). 
\end{split}    
\end{equation*}
This means that
\begin{equation*}
\begin{split}
& k \cdot d(x_n,x_\infty) + k \cdot d(y_n,x_\infty) + d(T_n y_n, T_\infty x_\infty) - d(x_n,x_\infty) \in P \Leftrightarrow \\
& k \cdot d(y_n,x_\infty) + d(T_n y_n, T_\infty x_\infty) + (k-e) \cdot d(x_n,x_\infty) \in P \Leftrightarrow \\
& k \cdot d(y_n,x_\infty) + d(T_n y_n, T_\infty x_\infty) - (e-k) \cdot d(x_n,x_\infty) \in P. 
\end{split}    
\end{equation*}
Now, since $\rho(k) < 1$ and $k \succeq \theta$, then there exists $(e-k)^{-1} \succeq \theta$. Furthermore $(e-k)^{-1} = \sum\limits_{i=0}^{\infty} k^i \in P$, since $k \in P$. At the same time, using the fact that $P^2 \subset P$ and multiplying by $(e-k)^{-1}$ , we obtain that $(e-k)^{-1} \left[ k d(y_n,x_\infty) + d(T_n y_n,T_\infty x_\infty) \right] - d(x_n,x_\infty) \in P$. This is equivalent to
$$ d(x_n,x_\infty) \preceq (e-k)^{-1} \left[ k d(y_n,x_\infty) + d(T_n y_n,T_\infty x_\infty) \right]. $$
Finally, since $(d(y_n,x_\infty))_{n \in \mathbb{N}}$ and $(d(T_n y_n,T_\infty x_\infty))_{n \in \mathbb{N}}$ are c-sequences, then also \\
$((e-k)^{-1}d(y_n,x_\infty))_{n \in \mathbb{N}}$ and $((e-k)^{-1}d(T_n y_n,T_\infty x_\infty))_{n \in \mathbb{N}}$ are c-sequences. So, for an arbitrary element $c \gg \theta$, $c \in \mathcal{A}$, there exists $N=N(c) \geq 0$, such that for every $n \geq N$, one has that $d(x_n,x_\infty) \preceq (e-k)^{-1} k d(y_n,x_\infty) + (e-k)^{-1} d(T_n y_n,T_\infty x_\infty) \ll c$, so the sequence $(d(x_n,x_\infty))_{n \in \mathbb{N}}$ is indeed a c-sequence.
\end{proof}

\begin{remark}\label{R3.7}
In Theorem \ref{T3.6} we supposed that indeed there exists $x_n \in F_{T_n}$. An alternative way is to suppose that $(X,d)$ is a complete cone metric space over $\mathcal{A}$ and after that one can establish a local variant of existence and uniqueness of fixed points for the mappings $T_n$, since they are contractions with respect to the cone metric, but not on the whole metric space.
\end{remark}

Now, it is time to present a consequence of Theorem \ref{T3.6} in which we refer to the connection between the pointwise convergence of a sequence of self-mappings and the (G)-property of the same sequence.
\begin{corollary}\label{C3.8}
Let $(X,d)$ be a cone metric space over a Banach algebra $\mathcal{A}$. Also, consider $T_n, T_\infty : X \to X$ some given mappings. Suppose the following assumptions are satisfied :
\begin{align*}
& (i) \ T_n \xrightarrow{p} T_\infty \text{ as } n \to \infty, \\
& (ii) \ T_n \text{ is a k-contraction with respect to the cone metric, for eac } n \in \mathbb{N}, \\
& (iii) \ \text{ there exists } x_n \in F_{T_{n}} \text{ and } x_\infty \in F_{T_{\infty}}. 
\end{align*}
Then, $(T_n)_{n \in \mathbb{N}}$ has the property $(G)$, with $T_{\infty}$ as the $(G)$-limit. 
\end{corollary}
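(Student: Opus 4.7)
The plan is to verify property $(G)$ directly from Definition 3.1. In the setting of the corollary, every operator is defined on the whole space, so $X_n = X_\infty = X$. Hence I must exhibit, for each $x \in X$, a sequence $(y_n)_{n \in \mathbb{N}}$ in $X$ such that both $(d(y_n, x))_{n \in \mathbb{N}}$ and $(d(T_n y_n, T_\infty x))_{n \in \mathbb{N}}$ are $c$-sequences. The natural candidate, which I expect to work, is the constant sequence $y_n := x$ for every $n \in \mathbb{N}$.

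With this choice, the first sequence is $d(y_n,x) = d(x,x) = \theta$ for all $n$. Since the cone $P$ is solid, for any $c \gg \theta$ one has $\theta \ll c$, so the constantly-$\theta$ sequence is trivially a $c$-sequence in the sense of Definition 1.11. For the second sequence, $d(T_n y_n, T_\infty x) = d(T_n x, T_\infty x)$, and the pointwise convergence hypothesis $T_n \xrightarrow{p} T_\infty$ in Definition 2.1 says exactly that for this fixed $x$ and every $c \gg \theta$ there is an index $N(c,x)$ beyond which $d(T_n x, T_\infty x) \ll c$. That is precisely the defining condition for $(d(T_n x, T_\infty x))_{n \in \mathbb{N}}$ to be a $c$-sequence.

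Combining these two observations, the constant sequence $y_n = x$ witnesses property $(G)$ at the point $x$, and since $x \in X$ was arbitrary the family $(T_n)_{n \in \mathbb{N}}$ satisfies $(G)$ with $T_\infty$ as its $(G)$-limit. Hypotheses $(ii)$ and $(iii)$ on the contractive character of $T_n$ and on the existence of fixed points play no role in the argument for $(G)$ itself; they are the standing assumptions of the section, and their real purpose is to allow one to chain this corollary into Theorem 3.6, thereby deducing the further conclusion that $(d(x_n, x_\infty))_{n \in \mathbb{N}}$ is a $c$-sequence.

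I do not anticipate a serious obstacle here. The only conceptual point worth emphasizing is the observation that, thanks to pointwise convergence being defined through the $\ll$-relation of the Banach algebra, the c-sequence condition comes for free without needing to bound $d(T_n x, T_\infty x)$ by anything involving the contraction coefficient $k$; picking $y_n = x$ simply sidesteps the Lipschitz estimate that was essential in the proofs of Theorems 3.5 and 3.6.
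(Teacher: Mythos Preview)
Your proof is correct and follows essentially the same approach as the paper: both arguments take the constant sequence $y_n = x$ (the paper writes $x_n = x$), observe that $d(x,x)=\theta$ is trivially a $c$-sequence, and then invoke the pointwise convergence hypothesis directly to conclude that $(d(T_n x, T_\infty x))_{n\in\mathbb{N}}$ is a $c$-sequence. Your additional remark that assumptions $(ii)$ and $(iii)$ are not actually used in establishing property $(G)$ is accurate and worth noting.
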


\begin{proof}
From (i), it follows that for each $c \gg \theta$, $c \in \mathcal{A}$ and for every $x \in (X,d)$, there exists and index $N$ that depends on $c$ and $x$, such that for all $n \geq N$, one has that $d(T_n x,T_\infty x) \ll c$. We shall show that if $T_n \xrightarrow{p} T_\infty$, then the family $(T_n)$ has the property (G), with $T_\infty$ as the (G)-limit. For the case when $(T_n)$ has the (G) property, then for every $x \in X_\infty = X$, there exists $(x_{n})_{n \in \mathbb{N}}$, with $x_n \in X$ for each $n \in \mathbb{N}$, such that $(d(x_n,x))_{n \in \mathbb{N}}$ and $(d(T_n x_n, T_\infty x))_{n \in \mathbb{N}}$ are c-sequences. Furthermore, let's consider an arbitrary element $x \in X$. Taking $x_n = x$, for each $n \in \mathbb{N}$, we obtain that $d(x_n,x) = \theta \ll c$, so $(d(x_n,x))_{n \in \mathbb{N}}$ is a c-sequence. Moreover, $(d(T_n x_n, T_\infty x))_{n \in \mathbb{N}}$ is also a c-sequence, because of (i).
\end{proof}

Now, we shall present a theorem in which we are concerned with the relationship between the pointwise convergence of a sequence of mappings in the setting of cone metric spaces and the equicontinuity of the family of mappings.

\begin{theorem}\label{T3.9}
Let $(X,d)$ be a cone metric space over a Banach algebra $\mathcal{A}$ and $M$ be a nonempty subset of $X$. Furthermore, let $T_n : M \to X$ be a given operator such that the family $(T_n)$ has the (G) property with the (G)-limit $T_\infty$. Also, let's suppose that the following conditions are satisfied : 
\begin{align*}
& (i) \ \text{ the family } (T_n) \text{ is equicontinuous on } M, \\
& (ii) \text{ there exists } x_{n} \in F_{T_{n}}, \text{ for each } n \in \mathbb{N} \text{ and } x_\infty \in F_{T_{\infty}}.
\end{align*}
Then $T_{n} \xrightarrow{p} T_{\infty}$.
\end{theorem}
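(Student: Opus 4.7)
The plan is to fix an arbitrary $x \in M$ and an arbitrary $c \gg \theta$ in $\mathcal{A}$, use the pointwise equicontinuity of $(T_n)$ at $x$ to dominate $d(T_n x, T_n y_n)$ for any sequence $(y_n)$ close to $x$, and then use property $(G)$ to produce one such sequence for which $(d(T_n y_n, T_\infty x))_{n \in \mathbb{N}}$ is additionally a c-sequence. The conclusion then falls out of the triangle inequality after splitting $c$ as $c/2 + c/2$.

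Concretely, I would first observe that $c/2 \gg \theta$, since $\mathrm{int}(P)$ is stable under multiplication by positive real scalars (i.e.\ $(1/2)\mathrm{int}(P) = \mathrm{int}(P)$). Applying equicontinuity of the family $(T_n)$ at the point $x$ with threshold $c/2$ (Definition \ref{D2.3}), I obtain some $c_2 \gg \theta$, depending on $x$ and $c$, such that for every $y \in M$ with $d(x,y) \ll c_2$ and every $n \in \mathbb{N}$ one has $d(T_n x, T_n y) \ll c/2$.

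Next, since $x \in X_\infty = M$, property $(G)$ (Definition \ref{D3.1}) supplies a sequence $(y_n)_{n \in \mathbb{N}}$ with $y_n \in X_n = M$ such that both $(d(y_n,x))_{n \in \mathbb{N}}$ and $(d(T_n y_n, T_\infty x))_{n \in \mathbb{N}}$ are c-sequences. From the definition of a c-sequence there exist $N_1$ with $d(y_n,x) \ll c_2$ for all $n \geq N_1$, and $N_2$ with $d(T_n y_n, T_\infty x) \ll c/2$ for all $n \geq N_2$. Setting $N := \max\{N_1,N_2\}$, the triangle inequality together with part (1) of Lemma \ref{L1.12} yields, for every $n \geq N$,
\[
d(T_n x, T_\infty x) \preceq d(T_n x, T_n y_n) + d(T_n y_n, T_\infty x) \ll \tfrac{c}{2} + \tfrac{c}{2} = c,
\]
using that $u \ll a$ and $v \ll b$ imply $u + v \ll a + b$ (which follows from $\mathrm{int}(P) + \mathrm{int}(P) \subseteq \mathrm{int}(P)$). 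Since $c \gg \theta$ and $x \in M$ were arbitrary, this is precisely $T_n \xrightarrow{p} T_\infty$ in the sense of Definition \ref{D2.1}.

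I do not foresee a serious obstacle: the argument is a straightforward ``$\varepsilon/2$''-type splitting transferred to the cone-metric setting. The only delicate points are the two structural facts about the solid cone used above (halving preserves $\gg \theta$, and the sum of two strict dominations is a strict domination), both standard consequences of $P$ being a solid cone. I also note that the fixed-point hypothesis (ii) seems not to be needed for this particular conclusion, and is presumably listed for consistency with the surrounding results of Section 3.
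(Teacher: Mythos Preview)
Your proof is correct and follows essentially the same route as the paper: fix $x$ and $c$, use equicontinuity to control $d(T_n x, T_n y_n)$ once the $(G)$-sequence $(y_n)$ is close to $x$, and then close via the triangle inequality. The only cosmetic difference is that you do an explicit $c/2$-split (invoking $\tfrac{1}{2}\,\mathrm{int}(P)\subseteq\mathrm{int}(P)$ and $\mathrm{int}(P)+\mathrm{int}(P)\subseteq\mathrm{int}(P)$), whereas the paper instead appeals to Lemma~\ref{L1.12}(3) that a sum of two c-sequences is a c-sequence; your remark that hypothesis~(ii) is not used in the argument is also accurate.
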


\begin{proof}
By (i), since the family $(T_n)$ is equicontinuous, it follows that for each $c_1 \gg \theta$, $c_1 \in \mathcal{A}$ and for every $x \in (X,d)$, there exists $c_2 \gg \theta$, $c_2 \in \mathcal{A}$ that depends on $c_1$ and $x$, such that for all $y \in (X,d)$ with $d(x,y) \ll c_2$, one has that $d(T_n x,T_n y) \ll c_1$. Let's suppose that $(T_n)$ has the (G) property with the (G)-limit $T_\infty$, i.e. for each $x \in M$, there exists a sequence $(x_n)_{n \in \mathbb{N}}$ from $M$, for which one has $(d(x_n,x))_{n \in \mathbb{N}}$ and $(d(T_n x_n,T_\infty x))_{n \in \mathbb{N}}$ are c-sequences. Moreover, we want to show that for every arbitrary element $c \gg \theta$, $c \in \mathcal{A}$ and for each $x \in (M,d)$, there exists an index $N \geq 0$ that depends on $c$ and $x$ such that for every $n \geq N$, one has $d(T_n x,T_\infty x) \ll c$. So, let $c \gg \theta$ be a fixed arbitrary element of the given Banach algebra and $x \in M \subset X$.  From the equicontinuity of the family $(T_n)$ over $\mathcal{A}$, there exists $\bar{c}$ that depends on $c$ and $x$, where $\bar{c}$ is from $P$, such that $d(x,y) \ll \bar{c}$ implies that $d(T_n x, T_n y) \ll c$, with $y \in M$ and $n \in \mathbb{N}$. For $x$ and $c$, there exists an index $N_1 \geq 0$ that depends on $c$ and $x$, such that for every $n \geq N_1$, one has that $d(T_n x_n, T_n x) \ll c$. Taking $n \geq \max \lbrace N,N_1 \rbrace$, we have that 
$$ d(T_n x, T_\infty x) \preceq d(T_n x_n, T_\infty x) + d(T_n x_n, T_n x) \ll c,$$
where we have used the fact that $(d(T_n x_n,T_\infty x))_{n \in \mathbb{N}}$ and $(d(T_n x_n,T_n x))_{n \in \mathbb{N}}$ are c-sequences, so their sum is also a c-sequence by (3) of Lemma \ref{L1.12}. Also, $N_2$ is the index that is found out from the fact that $(d(T_n x_n,T_\infty x))_{n \in \mathbb{N}}$ is a c-sequence. Finally, we recall that we also have used the idea that if $a,b$ and $c^{\prime}$ are elements from $\mathcal{A}$, such that $a \preceq b$ and $b \ll c^{\prime}$, then $a \ll c^{\prime}$.
\end{proof}

Now, the next theorem of this section is an existence result for the fixed points of the (G)-limit mapping of a sequence of contractions with respect to the cone metric space over a given Banach algebra.

\begin{theorem}\label{T3.10}
Let $(X,d)$ be a cone metric space over a Banach algebra $\mathcal{A}$. Also, consider $X_n$ and $X_\infty$ some given nonempty subsets of $X$. Let $T_n : X_n  \to X$ and $T_\infty : X_\infty \to X$ be some mappings that satisfy :
\begin{align*}
& (i) \ \text{ the family } (T_n) \text{ has property (G) with the (G)-limit } T_\infty, \\
& (ii) \ T_n \text{ are k-contractions in the sense of the given cone metric}, \\
& (iii) \ \text{ there exists } x_n \in F_{T_{n}}.
\end{align*}
Then, there exists $x_\infty \in F_{T_\infty}$ if and only if the sequence $(x_n)_{n \in \mathbb{N}}$ is convergent in $X_\infty$ in the sense of the Banach algebra (i.e. there exists $y \in X_\infty$ such that $(d(x_n,y))_{n \in \mathbb{N}}$ is a c-sequence).
\end{theorem}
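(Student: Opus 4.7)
The plan is to prove the two implications separately, leveraging Theorem \ref{T3.6} for the forward direction and a triangle-inequality estimate combined with Lemma \ref{L1.12} for the reverse direction.

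For the direction ($\Longrightarrow$), I would simply invoke Theorem \ref{T3.6}. Indeed, assumptions (i), (ii), (iii) of Theorem \ref{T3.10} match exactly the hypotheses of Theorem \ref{T3.6} (noting that a $k$-contraction with $\rho(k)<1$ is certainly $k$-Lipschitz and that the (G)-property together with the existence of $x_\infty\in F_{T_\infty}$ is precisely what Theorem \ref{T3.6} needs). Hence $(d(x_n,x_\infty))_{n\in\mathbb{N}}$ is a $c$-sequence, which yields the convergence in $X_\infty$ in the Banach-algebra sense with $y=x_\infty$.

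For the direction ($\Longleftarrow$), suppose there exists $y\in X_\infty$ with $(d(x_n,y))_{n\in\mathbb{N}}$ a $c$-sequence. The goal is to show $T_\infty y = y$, i.e.\ $d(T_\infty y, y)=\theta$. By the (G)-property applied at the point $y\in X_\infty$, there is a sequence $(z_n)_{n\in\mathbb{N}}$ with $z_n\in X_n$ such that $(d(z_n,y))_{n\in\mathbb{N}}$ and $(d(T_n z_n, T_\infty y))_{n\in\mathbb{N}}$ are both $c$-sequences. Using $x_n=T_n x_n$ and the triangle inequality, I would estimate
\begin{equation*}
d(T_\infty y, y) \preceq d(T_\infty y, T_n z_n) + d(T_n z_n, T_n x_n) + d(T_n x_n, y),
\end{equation*}
and then bound the middle term via the $k$-contractivity of $T_n$:
\begin{equation*}
d(T_n z_n, T_n x_n) \preceq k\, d(z_n, x_n) \preceq k\, d(z_n, y) + k\, d(y, x_n).
\end{equation*}
Combining these gives
\begin{equation*}
d(T_\infty y, y) \preceq d(T_\infty y, T_n z_n) + k\, d(z_n,y) + (e+k)\, d(x_n, y).
\end{equation*}

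Each of the three summands on the right is a $c$-sequence: the first by the (G)-property, the second by (3) of Lemma \ref{L1.12} applied to $(d(z_n,y))_{n\in\mathbb{N}}$ and $k\in P$, and the third likewise from $(d(x_n,y))_{n\in\mathbb{N}}$ scaled by $e+k\in P$. By (3) of Lemma \ref{L1.12} again, the sum is a $c$-sequence. Since $d(T_\infty y, y)$ is a fixed (non-negative) element of $\mathcal{A}$ majorized by this $c$-sequence, for every $c\gg\theta$ we obtain $\theta\preceq d(T_\infty y, y) \ll c$ via (1) of Lemma \ref{L1.12}. Then (2) of Lemma \ref{L1.12} forces $d(T_\infty y, y)=\theta$, so $y\in F_{T_\infty}$ and we may take $x_\infty:=y$.

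The main delicate point is the reverse implication: we have no a priori information tying $y$ to the operator $T_\infty$ beyond the (G)-property, so the crucial manoeuvre is to bridge $T_\infty y$ and $y$ through the auxiliary sequence $(z_n)$ supplied by the (G)-property together with the fixed points $x_n$; the $k$-contractivity then lets us absorb $d(z_n,x_n)$ into a sum of $c$-sequences. Once this estimate is set up, the conclusion is a routine application of parts (1)--(3) of Lemma \ref{L1.12}.
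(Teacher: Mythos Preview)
Your proposal is correct and follows essentially the same route as the paper's proof: the forward implication is dispatched by invoking Theorem~\ref{T3.6}, and the reverse implication is obtained by applying the (G)-property at the limit point, inserting the fixed points $x_n=T_nx_n$ via the triangle inequality, using the $k$-contractivity of $T_n$ to bound $d(T_n z_n,T_n x_n)$, and concluding with parts (1)--(3) of Lemma~\ref{L1.12}. The only cosmetic difference is notation (the paper calls the limit $x_\infty$ and the auxiliary sequence $(y_n)$, and it leaves the four summands ungrouped rather than collecting two of them into $(e+k)\,d(x_n,y)$).
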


\begin{proof}
From Theorem \ref{T3.6} it follows that if there exists $x_\infty \in F_{T_{\infty}}$, then $(d(x_n,x_\infty))_{n \in \mathbb{N}}$ is a c-sequence. Now, we consider the reverse implication, namely let $(x_{n})_{n \in \mathbb{N}}$, with $x_{n} \in X_{n}$ for each $n \in \mathbb{N}$ and $x_\infty \in X_\infty$ be such that $(d(x_n,x_\infty))_{n \in \mathbb{N}}$ is a c-sequence. For the element $x_\infty$, by (i) we have that there exists a sequence $(y_n)_{n \in \mathbb{N}}$, with $y_n \in X_n$ for every $n \in \mathbb{N}$ such that $(d(y_n,x_\infty))_{n \in \mathbb{N}}$ and $(d(T_n y_n,T_\infty x_\infty))_{n \in \mathbb{N}}$ are c-sequences. Then, we obtain 
\begin{equation*}
\begin{split}
d(x_\infty,T_\infty x_\infty) & \preceq d(x_\infty,x_n) + d(T_n x_n, T_n y_n) + d(T_n y_n,T_\infty x_\infty)  \\
& \preceq d(x_\infty,x_n) + k \cdot d(x_n,y_n) + d(T_n y_n,T_\infty x_\infty) \\
& \preceq d(x_\infty,x_n) + k \cdot d(x_n,x_\infty) + k \cdot d(y_n,x_\infty) + d(T_n y_n,T_\infty x_\infty).
\end{split}    
\end{equation*}
Since all the elements from the right hand side are c-sequences, by (3) of Lemma \ref{L1.12}, the whole sum from the right hand side is a c-sequence. This means that for an arbitrary $c \gg \theta$ we have that $ 0 \preceq d(x_\infty, T_\infty x_\infty) \ll c$. By (2) of Lemma \ref{L1.12} it follows that $x_\infty = T_\infty x_\infty$, so the proof is over.
\end{proof}

Now, we are ready to present our last two results from the present section regarding the link between the uniform convergence with respect to the cone metric and the (H)-property of a given sequence of operators.

\begin{theorem}\label{T3.11}
Let $(X,d)$ be a cone metric space over a Banach algebra $\mathcal{A}$. Also, consider $M \subset X$ a nonempty set. Let $T_n,T_\infty : M \to X$ some given mappings. \\
a) If $T_n \xrightarrow{u} T_\infty$, then $T_\infty$ is the (H)-limit of the family $(T_n)$. \\
b) If $T_\infty$ is the (H)-limit of $(T_n)$ and if $T_\infty$ is uniformly continuous on $M$, then $T_n \xrightarrow{u} T_\infty$.
\end{theorem}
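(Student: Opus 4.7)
The plan is to prove the two implications separately, using Lemma \ref{L1.12}(3) (sum of two $c$-sequences is a $c$-sequence) and Lemma \ref{L1.12}(1) (the $\preceq \cdots \ll$ transitivity) as the main workhorses.

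For part a), I would take the most obvious candidate: given an arbitrary sequence $(x_n)_{n\in\mathbb{N}}$ with $x_n \in M$, set $y_n := x_n$ for every $n$. Then $d(x_n,y_n) = \theta$, so $(d(x_n,y_n))_{n \in \mathbb{N}}$ is trivially a $c$-sequence. For the second requirement, fix $c \gg \theta$; by the uniform convergence hypothesis $T_n \xrightarrow{u} T_\infty$, there exists $N = N(c)$ such that $d(T_n x, T_\infty x) \ll c$ for every $n \geq N$ and every $x \in M$. Specializing $x = x_n$, we obtain $d(T_n x_n, T_\infty y_n) = d(T_n x_n, T_\infty x_n) \ll c$ for all $n \geq N$, so $(d(T_n x_n, T_\infty y_n))_{n \in \mathbb{N}}$ is a $c$-sequence. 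Hence property (H) holds.

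For part b), the natural approach is proof by contradiction. Assume uniform convergence fails. Then there exists $c_0 \gg \theta$ and a strictly increasing sequence of indices $(n_k)_{k \in \mathbb{N}}$ with points $z_{n_k} \in M$ such that $d(T_{n_k} z_{n_k}, T_\infty z_{n_k}) \not\ll c_0$ for every $k$. Extend this to a full sequence $(x_n)_{n \in \mathbb{N}} \subset M$ by setting $x_{n_k} = z_{n_k}$ at the selected indices and picking arbitrary elements of $M$ elsewhere. By the (H)-property applied to $(x_n)$, there exists $(y_n)_{n \in \mathbb{N}} \subset M$ such that both $(d(x_n,y_n))_{n \in \mathbb{N}}$ and $(d(T_n x_n, T_\infty y_n))_{n \in \mathbb{N}}$ are $c$-sequences.

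Now uniform continuity of $T_\infty$ on $M$ provides, for every $c_1 \gg \theta$, a $c_2 \gg \theta$ such that $d(a,b) \ll c_2$ implies $d(T_\infty a, T_\infty b) \ll c_1$; combined with the fact that $(d(x_n,y_n))_{n \in \mathbb{N}}$ is a $c$-sequence, this yields that $(d(T_\infty x_n, T_\infty y_n))_{n \in \mathbb{N}}$ is also a $c$-sequence. The triangle inequality in $(X,d)$ then gives
\begin{equation*}
d(T_n x_n, T_\infty x_n) \preceq d(T_n x_n, T_\infty y_n) + d(T_\infty y_n, T_\infty x_n),
\end{equation*}
and the right-hand side is a $c$-sequence by Lemma \ref{L1.12}(3). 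Applying Lemma \ref{L1.12}(1), we conclude that $(d(T_n x_n, T_\infty x_n))_{n \in \mathbb{N}}$ is a $c$-sequence, so in particular $d(T_n x_n, T_\infty x_n) \ll c_0$ for all $n$ sufficiently large. Taking $n = n_k$ for $k$ large enough contradicts the construction of the subsequence, and the proof is complete.

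The main obstacle I anticipate is the step converting uniform continuity of $T_\infty$ into the statement that $(d(T_\infty x_n, T_\infty y_n))_{n \in \mathbb{N}}$ is a $c$-sequence; although essentially routine in cone metric spaces over Banach algebras (it mirrors the classical $\varepsilon$–$\delta$ passage), it must be carried out carefully with respect to the partial order $\preceq$ and the interior $\mathrm{int}(P)$, as the argument relies decisively on uniformity and not merely on continuity.
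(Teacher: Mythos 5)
Your proof of part a) is exactly the paper's argument: take $y_n = x_n$, note $d(x_n,y_n)=\theta$ is trivially a $c$-sequence, and specialize the uniform convergence to $x=x_n$ to get the second $c$-sequence. For part b) the paper gives no proof at all --- it simply refers the reader to the metric-space argument of Barbet and Nachi --- so your contradiction argument actually supplies what the paper omits. Your argument for b) is sound: the negation of uniform convergence yields a subsequence $(n_k)$ and points $z_{n_k}\in M$ with $d(T_{n_k}z_{n_k},T_\infty z_{n_k})\not\ll c_0$; padding this to a full sequence and invoking property (H) produces $(y_n)\subset M$ with $(d(x_n,y_n))_{n}$ and $(d(T_nx_n,T_\infty y_n))_{n}$ $c$-sequences; uniform continuity transfers the first into the statement that $(d(T_\infty x_n,T_\infty y_n))_{n}$ is a $c$-sequence (the $\varepsilon$--$\delta$ passage you flag goes through verbatim with $c_1,c_2\in \mathrm{int}(P)$ in place of $\varepsilon,\delta$, exactly as in Definition \ref{D2.4}); and the triangle inequality together with Lemma \ref{L1.12}(1) and (3) forces $d(T_{n_k}z_{n_k},T_\infty z_{n_k})\ll c_0$ for large $k$, a contradiction. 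The contradiction route is essentially unavoidable here, since property (H) is a statement about sequences while uniform convergence quantifies over all of $M$ at each index, so one must encode a putative failure of uniformity into a single sequence; your write-up does this correctly.
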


\begin{proof}
a) Suppose that $T_n \xrightarrow{u} T_\infty$, i.e. for each $c \gg \theta$, with $c \in \mathcal{A}$, there exists an index $N = N(c) \geq 0$, such that for all $n \geq N$, it follows that $d(T_n x,T_\infty x) \ll c$, for each arbitrary $x$. Let's consider a sequence $(x_n)_{n \in \mathbb{N}}$, such that $x_n \in X_n$ for every $n \in \mathbb{N}$. Taking $y_n = x_n \in X_n = X_\infty = M$, we only need to show that $(d(T_n x_n,T_\infty y_n))_{n \in \mathbb{N}} = (d(T_n x_n,T_\infty x_n))_{n \in \mathbb{N}}$ is a c-sequence. Finally, taking $x = x_n$ in the definition of uniform convergence with respect to the cone metric $d$ of the family $(T_n)$, then the proof is over. \\
b) We let the proof to the reader, since it follows in a similar way \cite{BarbetNachi}, namely the one from the case of metric spaces. Furthermore, the concept of uniform continuity in the framework of a cone metric space over a Banach algebra can be extended from the case of usual metric spaces.
\end{proof}

Now, our last theorem of this section concerns the convergence of a sequence of fixed points of a family of mappings to the fixed point of the (H)-limit of the same family of operators.

\begin{theorem}\label{T3.12}
Let $(X,d)$ be a cone metric space over a Banach algebra $\mathcal{A}$. Consider $X_n$ for each $n \in \mathbb{N}$ and $X_\infty$ to be some nonempty subset of $X$. Also, let $T_n : X_n \to X$ and $T_\infty : X_\infty \to X$ be some mappings that satisfy the following assumptions :
\begin{align*}
& (i) \ x_n \in F_{T_n}, \\
& (ii) \ (T_n) \text{ has the property (H) with the (H)-limit } T_\infty, \\
& (iii) \ T_\infty \text{ is a } k_\infty-\text{contraction with respect to the cone metric } d, \\
& (iv) \ \text{ there exists and is unique } x_\infty \in F_{T_{\infty}}.
\end{align*}
Then $(d(x_n,x_\infty))_{n \in \mathbb{N}}$ is a c-sequence.
\end{theorem}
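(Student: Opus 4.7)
The plan is to imitate the structure of the proof of Theorem \ref{T3.6}, but using the (H)-property applied directly to the sequence of fixed points $(x_n)_{n \in \mathbb{N}}$ instead of the pointwise (G)-type existence of approximating sequences. First, I would apply property (H) to the concrete sequence $(x_n)_{n \in \mathbb{N}}$ (with $x_n \in X_n$ for each $n$) to produce a companion sequence $(y_n)_{n \in \mathbb{N}} \subset X_\infty$ such that both $(d(x_n, y_n))_{n \in \mathbb{N}}$ and $(d(T_n x_n, T_\infty y_n))_{n \in \mathbb{N}}$ are c-sequences. This is the key structural replacement: in the (G)-case one could fix a distinguished element of $X_\infty$ and approximate it, whereas in the (H)-case we are forced to test the property on the orbit of fixed points themselves.

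Next, using $x_n = T_n x_n$ and $x_\infty = T_\infty x_\infty$, I would insert $T_\infty y_n$ as an intermediate point and apply the triangle inequality, followed by the $k_\infty$-contractivity of $T_\infty$:
\begin{equation*}
d(x_n, x_\infty) = d(T_n x_n, T_\infty x_\infty) \preceq d(T_n x_n, T_\infty y_n) + k_\infty \, d(y_n, x_\infty).
\end{equation*}
Then I would split $d(y_n, x_\infty) \preceq d(y_n, x_n) + d(x_n, x_\infty)$ and substitute, producing
\begin{equation*}
d(x_n, x_\infty) \preceq d(T_n x_n, T_\infty y_n) + k_\infty \, d(y_n, x_n) + k_\infty \, d(x_n, x_\infty).
\end{equation*}

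The main algebraic step, and the one I expect to require the most care, is isolating $d(x_n, x_\infty)$ on the left. Rearranging gives
\begin{equation*}
d(T_n x_n, T_\infty y_n) + k_\infty \, d(y_n, x_n) - (e - k_\infty)\, d(x_n, x_\infty) \in P,
\end{equation*}
after which I would invoke Lemma \ref{L1.12}(5): since $k_\infty \in P$ and $\rho(k_\infty) < 1$, the element $(e - k_\infty)^{-1}$ exists and lies in $P$. Multiplying through by this inverse (and using $P^2 \subseteq P$) yields
\begin{equation*}
d(x_n, x_\infty) \preceq (e - k_\infty)^{-1}\bigl[\, d(T_n x_n, T_\infty y_n) + k_\infty \, d(y_n, x_n) \,\bigr].
\end{equation*}

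Finally, to conclude, I would combine the two c-sequences obtained from property (H) via Lemma \ref{L1.12}(3): since $k_\infty \in P$, the sequence $(k_\infty \, d(y_n, x_n))_{n \in \mathbb{N}}$ is a c-sequence, its sum with $(d(T_n x_n, T_\infty y_n))_{n \in \mathbb{N}}$ is a c-sequence, and one further left-multiplication by $(e - k_\infty)^{-1} \in P$ preserves this property. Hence the right-hand side is a c-sequence, and together with Lemma \ref{L1.12}(1) this forces $(d(x_n, x_\infty))_{n \in \mathbb{N}}$ to be a c-sequence as well. The uniqueness of $x_\infty$ assumed in (iv) is only needed to make the statement unambiguous; it is not used in the estimates themselves.
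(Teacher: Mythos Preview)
Your proposal is correct and follows essentially the same approach as the paper's own proof: apply property (H) to the sequence of fixed points $(x_n)$, obtain the companion sequence $(y_n)\subset X_\infty$, use the triangle inequality and the $k_\infty$-contractivity of $T_\infty$ to reach the same self-referential bound, then isolate $d(x_n,x_\infty)$ via multiplication by $(e-k_\infty)^{-1}\in P$ and conclude with Lemma~\ref{L1.12}. Your observation that assumption (iv) plays no role in the estimates is accurate and is also echoed by the paper in Remark~\ref{R3.13}.
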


\begin{proof}
From the property (H) and for the sequence $(x_n)_{n \in \mathbb{N}}$, there exists another sequence $(y_n)_{n \in \mathbb{N}}$ from $X_\infty$, for which one has that $(d(x_n,y_n))_{n \in \mathbb{N}}$ and $(d(T_n x_n,T_\infty y_n))_{n \in \mathbb{N}}$ are c-sequences. Furthermore, we consider the following chain of inequalities :
\begin{equation*}
\begin{split}
d(x_n,x_\infty) & = d(T_n x_n,T_\infty x_\infty) \\
& \preceq d(T_n x_n,T_\infty y_n) + d(T_\infty y_n,T_\infty x_\infty) \\
& \preceq d(T_n x_n,T_\infty y_n) + k_\infty \cdot d(y_n,x_\infty) \\
& \preceq d(T_n x_n,T_\infty y_n) + k_\infty \cdot d(y_n,x_n) + k_\infty \cdot d(x_n,x_\infty).
\end{split}    
\end{equation*}
After some easy algebraic manipulations, one obtains that 
\begin{align*}
d(T_n x_n,T_\infty y_n) + k_\infty \cdot d(y_n,x_n) - (e-k_\infty) \cdot d(x_n,x_\infty) \in P.    
\end{align*}
As in the proofs of the above theorems, using the property of the solid cone $P$, namely $P^2 \subset P$, the fact that $\rho(k_{\infty}) < 1$ and multiplying by $(e-k_\infty)^{-1}$, it follows that 
\begin{align*}
(e-k)^{-1} \cdot d(T_n x_n,T_\infty y_n) + (e-k)^{-1} \cdot k_\infty \cdot d(y_n,x_n) - d(x_n,x_\infty) \in P.      
\end{align*}
This is equivalent to 
\begin{align*}
d(x_n,x_\infty) \preceq (e-k)^{-1} \cdot d(T_n x_n,T_\infty y_n) + (e-k)^{-1} \cdot k_\infty \cdot d(y_n,x_n).    
\end{align*}
Using the fact that, by Lemma \ref{L1.12}, the right hand side is a c-sequence, the conclusion follows easily.
\end{proof}

At last, we are ready to give a crucial remark regarding the assumption (iv) from Theorem \ref{T3.12}.  

\begin{remark}\label{R3.13}
One can omit condition (iv) from the previous theorem if we suppose that $(X,d)$ is a complete cone over the Banach algebra $\mathcal{A}$. With this assumption, since $T_\infty$ is a contraction on a subset of the space in the sense of the cone metric $d$, then one can prove a local variant principle in which the operator has a unique fixed point. 
\end{remark}

\section{Applications to systems of functional and differential equations}

In this section we shall present some applications linked to functional coupled equations and systems of differential equations, respectively. Also, we shall show that our theorems from the second section are a viable tool for studying the convergence of the unique solution of different types of sequences regarding generalized type of functional and differential equations. Furthermore, in our first result, following Theorem 3.1 of \cite{HuangRadenovic} we shall present the convergence of the solutions of some coupled equations, using our results that are based upon the idea of an Banach algebra.

\begin{theorem}\label{T4.1}
Let $F_n, G_n, \tilde{F}, \tilde{G} : \mathbb{R}^2 \to \mathbb{R}^2$ be some given mappings (for $n \in \mathbb{N}$). Also, consider the following systems of coupled functional equations :
\begin{equation}\label{EQ4.1}
\begin{cases}
F_n (x,y) = 0 \\
G_n (x,y) = 0
\end{cases}
, \ \text{ with } (x,y) \in \mathbb{R}^2 \ ,
\end{equation}
and 
\begin{equation}\label{EQ4.2}
\begin{cases}
\tilde{F}(x,y) = 0 \\
\tilde{G}(x,y) = 0
\end{cases}
, \ \text{ with } (x,y) \in \mathbb{R}^2.
\end{equation}
Suppose that the mappings $F_n$, $G_n$, $\tilde{F}$ and $\tilde{G}$ satisfy the following assumptions : \\
(1) There exists $M > 0$, such as for $n \in \mathbb{N}$, there exists $L_n > 0$ satisfying $\max\limits_{n \geq 1} L_n \leq M < 1$, such that
\begin{equation*}
\begin{split}
\begin{cases}
& | F_n(x_1,y_1) - F_n(x_2,y_2) + x_1 - x_2 | \leq L_n |x_1 - x_2| \\
& | G_n(x_1,y_1) - G_n(x_2,y_2) + y_1 - y_2 | \leq L_n |y_1 - y_2| \\
\end{cases}
\end{split}    
\end{equation*}
where $(x_1,x_2)$ and $(y_1,y_2)$ are from $\mathbb{R}^2$. \\
(2) There exists $\tilde{L} \in (0,1)$, such that
\begin{equation*}
\begin{split}
\begin{cases}
& | \tilde{F}(x_1,y_1) - \tilde{F}(x_2,y_2) + x_1 - x_2 | \leq \tilde{L} |x_1 - x_2| \\
& | \tilde{G}(x_1,y_1) - \tilde{G}(x_2,y_2) + y_1 - y_2 | \leq \tilde{L} |y_1 - y_2| \\
\end{cases}
\end{split}    
\end{equation*}
where $(x_1,x_2)$ and $(y_1,y_2)$ are from $\mathbb{R}^2$. \\
(3) The sequence $(F_n)_{n \in \mathbb{N}}$ converges pointwise to $\tilde{F}$ and $(G_n)_{n \in \mathbb{N}}$ also converges pointwise to $\tilde{G}$ in the classical sense, i.e. :
\begin{equation*}
\begin{split}
\begin{cases}
& F_n \xrightarrow{p} \tilde{F} \\
& G_n \xrightarrow{p} \tilde{G}
\end{cases}
, \text{ i.e. } \ \ 
\begin{cases}
& \lim\limits_{n \to \infty} F_n(x) = \tilde{F}(x) \\
& \lim\limits_{n \to \infty} G_n(x) = \tilde{G}(x)
\end{cases}
, \ \text{ for each } x \in \mathbb{R}^2.
\end{split}    
\end{equation*}
Then $x_n$ converges to $\tilde{x}$ and $y_n$ converges to $\tilde{y}$, where $(x_n,y_n)$ is the unique solution of \ref{EQ4.1} and $(\tilde{x},\tilde{y})$ is the unique solution of \ref{EQ4.2}.
\end{theorem}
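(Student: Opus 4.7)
The strategy is to recast both coupled systems as fixed point problems for suitably chosen self-mappings on $\mathbb{R}^2$, equip $\mathbb{R}^2$ with the cone metric over the Banach algebra $\mathcal{A}=\mathbb{R}^2$ from Example \ref{E2.7}, and then appeal to Theorem \ref{T2.10}. Concretely, I would define
\[
T_{n}(x,y) := \bigl( x + F_{n}(x,y),\, y + G_{n}(x,y) \bigr),\qquad \tilde{T}(x,y) := \bigl( x + \tilde{F}(x,y),\, y + \tilde{G}(x,y) \bigr),
\]
so that solutions of \eqref{EQ4.1} coincide with $F_{T_n}$ and solutions of \eqref{EQ4.2} with $F_{\tilde{T}}$. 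I would then take $X := \mathbb{R}^2$ with $d\bigl((x_1,y_1),(x_2,y_2)\bigr) = (|x_1-x_2|,|y_1-y_2|) \in \mathcal{A}$ and the solid cone $P$ as in Example \ref{E2.7}.

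The first real step is to verify that each $T_n$ is an $\alpha$-contraction and $\tilde{T}$ is an $\alpha_0$-contraction in this Banach algebra sense. For $\alpha := (M,0) \in P$, hypothesis (1) together with the multiplication $u \cdot v = (u_1 v_1, u_1 v_2 + u_2 v_1)$ gives
\[
d(T_{n}(x_1,y_1),T_{n}(x_2,y_2)) \preceq \bigl( L_{n}|x_1-x_2|,\, L_{n}|y_1-y_2| \bigr) \preceq (M,0)\cdot d\bigl((x_1,y_1),(x_2,y_2)\bigr),
\]
and an easy induction shows $(M,0)^k = (M^k,0)$, so that $\rho(\alpha) = \lim \|(M^k,0)\|^{1/k} = M < 1$. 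Choosing $\alpha_0 := (\tilde{L},0)$ and using hypothesis (2) analogously gives the contraction property of $\tilde{T}$ with $\rho(\alpha_0)=\tilde{L}<1$. Simultaneously, $(\mathbb{R}^2,d)$ is complete over $\mathcal{A}$ since convergence and the Cauchy condition with respect to $d$ are equivalent to coordinate-wise convergence and the Cauchy condition in $\mathbb{R}$. Hence each $T_n$ and $\tilde{T}$ have unique fixed points, namely $(x_n,y_n)$ and $(\tilde{x},\tilde{y})$.

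Next, I would establish pointwise convergence $T_n \xrightarrow{p}\tilde{T}$ in the cone metric sense. For an arbitrary $(x,y) \in \mathbb{R}^2$ and an arbitrary $c=(c_1,c_2) \gg \theta$ (so $c_1, c_2 > 0$), classical pointwise convergence from (3) yields $N_1, N_2$ such that $|F_n(x,y)-\tilde{F}(x,y)| < c_1$ for $n \ge N_1$ and $|G_n(x,y)-\tilde{G}(x,y)| < c_2$ for $n \ge N_2$; taking $N := \max\{N_1,N_2\}$, one obtains
\[
d(T_n(x,y),\tilde{T}(x,y)) = \bigl(|F_n(x,y)-\tilde{F}(x,y)|,\,|G_n(x,y)-\tilde{G}(x,y)|\bigr) \ll c.
\]
All four hypotheses of Theorem \ref{T2.10} are thereby verified, and we conclude that $(d((x_n,y_n),(\tilde{x},\tilde{y})))_{n\in\mathbb{N}}$ is a $c$-sequence in $\mathcal{A}$.

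The final step is to translate this back into the classical conclusion. Since $d((x_n,y_n),(\tilde{x},\tilde{y})) = (|x_n-\tilde{x}|,|y_n-\tilde{y}|)$, being a $c$-sequence means that for any prescribed pair $(c_1,c_2)$ of positive reals, eventually $|x_n-\tilde{x}| < c_1$ and $|y_n-\tilde{y}| < c_2$; this is exactly $x_n \to \tilde{x}$ and $y_n \to \tilde{y}$ in $\mathbb{R}$. The only delicate point in the whole argument is the second step: one must verify that $(M,0)$ and $(\tilde{L},0)$, viewed with the nonstandard multiplication of $\mathcal{A}$, genuinely dominate the cone-metric Lipschitz bounds coming from hypotheses (1)--(2), and that the spectral radius inequality $\rho < 1$ holds; everything else (completeness, pointwise convergence in $\mathcal{A}$, and the passage back to real convergence) is essentially coordinate-wise bookkeeping.
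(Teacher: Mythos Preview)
Your proposal is correct and follows essentially the same approach as the paper: define $T_n$ and $\tilde T$ exactly as you do, work in the cone metric space of Example~\ref{E2.7}, verify the contraction property with coefficients $(M,0)$ and $(\tilde L,0)$ having spectral radius $<1$, check pointwise convergence $T_n\xrightarrow{p}\tilde T$ coordinate-wise, and apply Theorem~\ref{T2.10}. The only cosmetic difference is that the paper first records that each $T_n$ has its own coefficient $(L_n,0)$ and then argues that Theorem~\ref{T2.10} extends to varying coefficients bounded by $(M,0)$, whereas you more directly use $L_n\le M$ to take $(M,0)$ as a common contraction coefficient from the outset---your route is slightly cleaner but not materially different.
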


\begin{proof}
Let's consider the Banach algebra $\mathcal{A} = \mathbb{R}^2$ from Example \ref{E2.7}. Also, let $X = \mathbb{R}^2$. Furthermore, consider the operators $T_n,T : X \to X$, defined as
\begin{align*}
& T_n(x,y) = (F_n(x,y)+x,G_n(x,y)+y) \ \text{ and } \\
& T(x,y) = (\tilde{F}(x,y)+x,\tilde{G}(x,y)+y) , \ 
\end{align*}
for each $(x,y) \in \mathbb{R}^2$. As in \cite{HuangRadenovic} it is easy to see that for every $n \in \mathbb{N}$, the operator $T_n$ is a contraction with coefficient $(L_n,0)$ and that $T$ is also a contraction with respect to the Banach algebra $\mathcal{A}$, but with coefficient $(\tilde{L},0)$. Moreover, also following \cite{HuangRadenovic}, one can easily show that for each $n \in \mathbb{N}$, there exists and is unique $(x_n,y_n)$ the solution for the coupled equation system \ref{EQ4.1} and $(\tilde{x},\tilde{y})$ the solution for \ref{EQ4.2}, respectively. Now, our aim is to show that $(d((x_n,y_n),(\tilde{x},\tilde{y})))_{n \in \mathbb{N}}$ is a c-sequence with respect to the Banach algebra $\mathcal{R}^2$ endowed with the cone metric $d$ from Example \ref{E2.7}. Now, in Theorem \ref{T2.10} we considered contractions with the same coefficient $\alpha$, with $\rho(\alpha) < 1$. At the same time, one can easily see that the same theorem can be proved in the case when the operators have different contraction coefficients $\alpha_n$ for $n \in \mathbb{N}$, but endowed with the property that the sequence $(\alpha_n)_{n \in \mathbb{N}}$ is bounded, i.e. there exists $M \in P$ satisfying $\rho(M) < 1$, such as $\alpha_n \preceq M$, for all $n \in \mathbb{N}$. In our case, one can see that $\rho((L_n,0)) = \lim\limits_{n \to \infty} \| (L_n,0)^{n \|^{1/n}}$ and that $(L_n,0) \preceq (M,0)$, with $\rho(M,0) = M < 1$. Finally, the property that the sequence of coefficients is bounded leads to $\max\limits_{n \in \mathbb{N}} L_n \leq M < 1$. \\
For example, we can modify the proof of Theorem \ref{T2.10}, with : 
$$ d(x_{n}^{\ast}, x^{\ast}) \preceq \alpha_n d(x_{n}^{\ast},x^{\ast}) + d(T_n x^{\ast}, T x^{\ast}) \preceq M d(x_{n}^{\ast},x^{\ast}) + d(T_n x^{\ast}, T x^{\ast}),$$
when $\alpha_n \preceq M$, for every $n \in \mathbb{N}$. Here we have used the fact that if $\alpha_n \preceq M$, then $\alpha_n d_n \preceq M d_n$, where $d_n \in P$, because this leads to $d_n \cdot (M - \alpha_n) \in P$. This is of course a valid affirmation, because we can use the property $P^2 \subseteq P$ on $d_n$ and $M-\alpha_n$, which are both from the solid cone $P$. Moreover, we see that it is also a valid assumption that $(e-M)^{-1} \in P$, since $M$ is from the nonempty cone $P$.  \\
Now, using the above reasoning with respect to Theorem \ref{T2.10}, we only need to show that $T_n \xrightarrow{p} T$, i.e. :
\begin{align*}
& \text{ for each } c \gg (0,0) \text{ and for } z \in \mathbb{R}^2, \text{ there exists } N = N(c,z) \geq 0, \text{ such that for every } n \geq N, \\
& \text{ we must have that } d(T_n z,T z) \ll c.    
\end{align*}
Now, let $c = c(c_1,c_2) \in \mathbb{R}^2$, with $c_1,c_2 > 0$ and let $z=(x,y)$ an arbitrary element from $\mathbb{R}^2$. Then, we only need to show that
\begin{equation*}
\begin{split}
& (c_1,c_2) - d((F_n(x,y)+x,G_n(x,y)+y),(\tilde{F}(x,y)+x,\tilde{G}(x,y)+y)) \in int(P) \Leftrightarrow \\
& (c_1,c_2) - (|F_n(x,y)-\tilde{F}(x,y)|,|G_n(x,y)-\tilde{G}(x,y)|) \in int(P).
\end{split}    
\end{equation*}
This is equivalent to showing that
\begin{equation*}
\begin{cases}
& |F_n(x,y)-\tilde{F}(x,y)| < c_1 \ , \\
& |G_n(x,y)-\tilde{G}(x,y)| < c_2 \ .
\end{cases}    
\end{equation*}
Now, from the assumption $(3)$, we obtain that for $c_1 > 0$ and for $z=(x,y) \in \mathbb{R}^2$, there exists $N_1 = N_1(c_1,x,y) \geq 0$, such as for every $n \geq N_1$, it follows that $|F_n(x,y)-\tilde{F}(x,y)| < c_1$. In a similar way, for $c_2 > 0$ and for $z=(x,y) \in \mathbb{R}^2$, there exists $N_2 = N_1(c_2,x,y) \geq 0$, such as for every $n \geq N_2$, it follows that $|G_n(x,y)-\tilde{G}(x,y)| < c_2$. Finally, taking $n \geq N:= \max \lbrace N_1,N_2 \rbrace$, the conclusion follows easily.
\end{proof}

Now, our second crucial result from the present section concerns an application to systems of differential equations. In fact, using the results from the second section and the idea of a Banach algebra, we present an existence and uniqueness theorem for the solution of a nonlinear systems of differential equations.

\begin{theorem}\label{T4.2}
Let $D \subset \mathbb{R}^3$ and $(\alpha,\beta,\gamma) \in D$. Also, consider $\bar{\alpha}$, $\bar{\beta}$ and $\bar{\gamma}$ sufficiently small such that the compact set $\Delta := \lbrace (x,y,z) \ / \ |x-a| \leq \bar{a}, \ |y-\beta| \leq \bar{\beta}, \ |z-\gamma| \leq \bar{\gamma} \rbrace$ is a subset of $D$. \\
Consider the following nonlinear system of differential equations :
\begin{equation}\label{EQ4.3}
\begin{cases}
& y^\prime(x) = f(x,y(x),z(x)) \\
& z^\prime(x) = g(x,y(x),z(x)) \\
& y(a) = \beta \\
& z(a) = \gamma
\end{cases}
, \text{ where } x \in I := [a-\bar{a},a+\bar{a}].
\end{equation}
Also, suppose the following assumptions are satisfied :
\begin{equation*}
\begin{split}
& (1) \ \text{ the mappings } f \text{ and } g \text{ are continuous on } D, \\
& (2) \ \text{ there exists } L_1,L_2 > 0, \text{ such that } 
\end{split}
\end{equation*}
\begin{equation*}
\begin{split}
& \begin{cases}
& |f(x,y,z)-f(x,\bar{y},\bar{z})| \leq L_1 |y-\bar{y}| \\
& |g(x,y,z)-g(x,\bar{y},\bar{z})| \leq L_2 |z-\bar{z}|
\end{cases}
\ , \text{ for every } (x,y,z) \text{ and } (x,\bar{y},\bar{z}) \in D.
\end{split}
\end{equation*}
Then, there exists a unique solution for the nonlinear differential system \ref{EQ4.3} on $I = [a-\bar{a},a+\bar{a}]$. 
\end{theorem}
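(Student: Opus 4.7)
The plan is to convert the initial value problem \ref{EQ4.3} into an equivalent fixed point problem for an integral operator on a complete cone metric space over the Banach algebra $\mathcal{A} = \mathbb{R}^{2}$ from Example \ref{E2.7}, and then to apply the Banach contraction principle in that setting. First, I would take $X$ to be the set of all pairs $(y,z)$ of continuous functions $y,z : I \to \mathbb{R}$ that satisfy $|y(x)-\beta| \leq \bar{\beta}$ and $|z(x)-\gamma| \leq \bar{\gamma}$ for every $x \in I$, and equip it with the cone metric
\[ d((y_1,z_1),(y_2,z_2)) = \left( \sup_{x \in I} |y_1(x)-y_2(x)|, \, \sup_{x \in I} |z_1(x)-z_2(x)| \right), \]
verifying routinely that $(X,d)$ is a complete cone metric space over $\mathcal{A}$, with solid cone $P = \lbrace (u_1,u_2) \, / \, u_1,u_2 \geq 0 \rbrace$.

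Next, I would encode \ref{EQ4.3} by the operator $T : X \to X$ given componentwise by
\[ T_1(y,z)(x) = \beta + \int_a^x f(t,y(t),z(t))\, dt, \qquad T_2(y,z)(x) = \gamma + \int_a^x g(t,y(t),z(t))\, dt, \]
so that fixed points of $T$ in $X$ correspond exactly to solutions of \ref{EQ4.3} on $I$. By the continuity hypothesis (1), $f$ and $g$ are bounded on the compact set $\Delta$ by some constants $M_f, M_g > 0$, and by shrinking $\bar{a}$ so that $M_f \bar{a} \leq \bar{\beta}$ and $M_g \bar{a} \leq \bar{\gamma}$, one ensures $T(X) \subseteq X$.

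The key estimate combines the Lipschitz conditions (2) with the Banach algebra multiplication on $\mathcal{A}$. Setting $L := \max \lbrace L_1, L_2 \rbrace$ and $\alpha := (L\bar{a}, 0) \in P$, the identity $(k,0) \cdot (u_1,u_2) = (ku_1,ku_2)$ in $\mathcal{A}$ gives, after taking suprema over $x \in I$,
\[ d(T(y_1,z_1), T(y_2,z_2)) \preceq \alpha \cdot d((y_1,z_1),(y_2,z_2)). \]
A further reduction of $\bar{a}$ forces $\rho(\alpha) = L\bar{a} < 1$, so $T$ is an $\alpha$-contraction in the sense of hypothesis (ii) of Theorem \ref{T2.10}; the Banach contraction principle for cone metric spaces over Banach algebras (see \cite{LiuXu} and \cite{XuRadenovic}) then delivers a unique fixed point of $T$ in $X$, which is precisely the unique solution of \ref{EQ4.3} on $I$.

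The main obstacle is the simultaneous tuning of $\bar{a}$, which must be small enough both to keep $T(X) \subseteq X$ via the bounds involving $M_f, M_g, \bar{\beta}, \bar{\gamma}$, and to ensure $\rho(\alpha) < 1$. A secondary technical point is exploiting the non-commutative triangular multiplication on $\mathcal{A} = \mathbb{R}^{2}$ correctly: picking the contraction coefficient of the form $(L\bar{a}, 0)$ is what cleanly pairs each Lipschitz estimate with the corresponding coordinate of $d$ without cross-contamination between the two components, which is exactly the feature that allows the Banach algebra framework to handle the coupled system as a single contraction.
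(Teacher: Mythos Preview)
Your argument is correct and follows the same overall architecture as the paper's proof: rewrite \ref{EQ4.3} as an integral fixed point equation, build a complete cone metric space over $\mathcal{A}=\mathbb{R}^2$ from Example~\ref{E2.7}, check invariance of a closed box, and show the integral operator is an $\alpha$-contraction with $\alpha=(\alpha_1,0)\in P$ and $\rho(\alpha)=\alpha_1<1$.

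The one genuine methodological difference is the choice of norm. You use the plain supremum metric, which forces the contraction coefficient to be $\alpha_1=L\bar a$ and hence requires a further shrinking of $\bar a$ to secure $\rho(\alpha)<1$. The paper instead equips each factor with a Bielecki norm $\|y\|_{B,\tau}=\max_{t\in I}|y(t)|e^{-\tau(t-(a-\bar a))}$ and obtains $\alpha_1=\max\{L_1/\tau_1,\,L_2/\tau_2\}$, so the contraction condition is achieved by choosing $\tau_1,\tau_2$ large rather than by shrinking the interval; the interval is only trimmed (to $[a-h_i,a+h_i]$ with $h_i=\min\{\bar a,\bar\beta/M_f\}$, etc.) to guarantee $T(S)\subseteq S$. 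The upshot is the same local existence statement, but the Bielecki route cleanly decouples the invariance constraint from the contraction constraint, whereas your sup-norm route bundles both into smallness of $\bar a$. Your observation that the form $(\,\cdot\,,0)$ of the coefficient is what prevents cross-contamination between the two coordinates under the multiplication of $\mathcal{A}$ is exactly the mechanism the paper exploits as well.
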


\begin{proof}
First of all, we observe that the system of differential equation \ref{EQ4.3} can be written under the following integral form :
\begin{equation}\label{EQ4.4}
\begin{split}
\begin{cases}
& y(x) = \beta + \int\limits_{a}^{x} f(s,y(s),z(s)) \ ds \\
& z(x) = \gamma + \int\limits_{a}^{x} g(s,y(s),z(s)) \ ds
\end{cases}
\ , \text{ where } x \in I.
\end{split}    
\end{equation}
Furthermore, we consider the operator $T : C(I) \times C(I) \to C(I) \times C(I)$, defined as 
\begin{align*}
T(y,z)(x) = \left( \beta + \int\limits_{a}^{x} f(s,y(s),z(s)) \ ds, \gamma + \int\limits_{a}^{x} g(s,y(s),z(s)) \ ds \right), \text{ with } x \in I.    
\end{align*}
In a more simplified form, $T$ can be written as $T=(T_1,T_2)$, with $T(y,z) = \left( T_1(y,z), T_2(y,z) \right)$. Moreover, for each $x \in I$, one has that
$$ T(y,z)(x) = \left( T_1(y,z)(x), T_2(y,z)(x) \right). $$
Here $T_1,T_2 : C(I) \times C(I) \to C(I)$, where
\begin{equation}\label{EQ4.5}
\begin{split}
\begin{cases}
& T_1(y,z)(x) = \beta + \int\limits_{a}^{x} f(s,y(s),z(s)) \ ds \\
& T_2(y,z)(x) = \gamma + \int\limits_{a}^{x} g(s,y(s),z(s)) \ ds
\end{cases}
\ , \text{ where } x \in I.
\end{split}    
\end{equation}
Now, it is time to recall that the space $C(I)$ can be endowed with two norms, namely for all $x \in C(I)$, one has the Chebyshev norm $\| x \|_{C} = \max\limits_{t \in I} |x(t)|$ and the Bielecki norm $\| x \|_{B,\tau} = \max\limits_{t \in I} |x(t)| e^{-\tau (t-(a-\bar{a}))}$. Now, since we shall work very often with $C(I)$, it is intuitive to specify what norm is appropriate in each particular case. So, let's denote $\bar{X} = C(I,\| \cdot \|_{B,\tau_1}) \times C(I,\| \cdot \|_{B,\tau_2})$. One the other hand, we define the operator $d : \bar{X} \times \bar{X} \to \mathbb{R}^2$, by 
\begin{align*}
d(a_1,a_2) = d((y_1,z_1),(y_2,z_2)) = \left( \| y_1 - y_2 \|_{B,\tau_1}, \| z_1 - z_2 \|_{B,\tau_2} \right) \ ,    
\end{align*}
where $a_1 = (y_1,z_1)$ and $a_2 = (y_2,z_2)$ are from $\bar{X}$. This means that $y_1,y_2 \in C(I,\| \cdot \|_{B,\tau_1})$ and $z_1,z_2 \in C(I,\| \cdot \|_{B,\tau_2})$, respectively. Now, it is time to show that the mapping $d$ is a complete cone metric over $\mathbb{R}^2$. From Example \ref{E2.7}, we know that $\mathbb{R}^2$ is a Banach algebra with the solid 'positive' cone $P = \lbrace (y,z) \ / \ y \geq 0 \text{ and } z \geq 0 \rbrace \subset \mathbb{R}^2$. First of all, we shall show that $d$ satisfies the basic axioms of the cone metric over $\mathbb{R}^2$ : \\
(I) For each $(a_1,a_2) = ((y_1,z_1),(y_2,z_2)) \in \bar{X} \times \bar{X}$, we have that $d(a_1,a_2) \succeq (0,0)$ is equivalent to $\| y_1 - y_2 \|_{B,\tau_1} \geq 0$ and $\| z_1 - z_2 \|_{B,\tau_2} \geq 0$, which is evidently true. \\
(II) For each $(a_1,a_2) = ((y_1,z_1),(y_2,z_2)) \in \bar{X} \times \bar{X}$, we have that $d(a_1,a_2) = d((y_1,z_1),(y_2,z_2)) = \left( \| y_1 - y_2 \|_{B,\tau_1}, \| z_1 - z_2 \|_{B,\tau_2} \right)$ and that $d(a_2,a_1) = d((y_2,z_2),(y_1,z_1)) = \left( \| y_2 - y_1 \|_{B,\tau_1}, \| z_2 - z_1 \|_{B,\tau_2} \right)$, so the second axiom is also satisfied. \\
(III) Now, for the triangle inequality, we consider $a_1=(y_1,z_1)$, $a_2=(y_2,z_2)$ and $a_3=(y_3,z_3)$ three arbitrary elements from $\bar{X}$. We must show that $d(a_1,a_3) \preceq d(a_1,a_2) + d(a_2,a_3)$, which is equivalent to $d(a_1,a_2) + d(a_2,a_3) - d(a_1,a_3) \in \ P$. This leads to
\begin{equation*}
\begin{split}
\begin{cases}
& \| y_2 - y_3 \|_{B,\tau_1}  + \| y_1 - y_2 \|_{B,\tau_1}  \geq \| y_1 - y_3 \|_{B,\tau_1}  \\
& \| z_2 - z_3 \|_{B,\tau_2}  + \| y_z - z_2 \|_{B,\tau_2}  \geq \| z_1 - z_3 \|_{B,\tau_2}  
\end{cases}
\ ,
\end{split}    
\end{equation*}
which is also valid. Now, regarding $d$ and $\bar{X}$, we must show that $(\bar{X},d)$ is complete with respect to the setting of the Banach algebra $\mathbb{R}^2$. For this, let $(x_n)_{n \in \mathbb{N}} \subset \bar{X}$ be a Cauchy sequence in the sense of the Banach algebra $\mathbb{R}^2$. We shall show that this sequence is convergent. Now, since $x_n$ can be written as $x_n=(y_n,z_n)$ for every $n \in \mathbb{N}$, with $y_n \in C(I, \| \cdot \|_{B,\tau_1})$ and $z_n \in C(I, \| \cdot \|_{B,\tau_2})$ and since the sequence $(x_n)_{n \in \mathbb{N}}$ is Cauchy, then $(d(x_n,x_m))_{n \in \mathbb{N}}$ is a c-sequence. Consider $c_1, c_2 > 0$ arbitrary elements of $\mathbb{R}$. Taking $c=(c_1,c_2)$, there exists $N \geq 0$ that depends on $c$, such that for every $n,m \geq N$, one has that $(c_1,c_2) - d(x_n,x_m) \in \ int(P)$. Now, this leads to $\| y_n - y_m \|_{B,\tau_1} < c_1$ and $\| z_n - z_m \|_{B,\tau_2} < c_2$, respectively. Also, since $c_1$ and $c_2$ are arbitrary elements of $\mathbb{R}$, then it follows that
\begin{align*}
& \lim\limits_{n,m \to \infty} \| y_n - y_m \|_{B,\tau_1} = 0 \ , \\
& \lim\limits_{n,m \to \infty} \| z_n - z_m \|_{B,\tau_2} = 0 \ , \\
\end{align*}
i.e. $(y_n)_{n \in \mathbb{N}}$ is Cauchy with respect to $X_1$ and $(z_n)_{n \in \mathbb{N}}$ is Cauchy with respect to $X_2$, where $X_1 := C(I,\| \cdot \|_{B,\tau_1})$ and $X_2 := C(I,\| \cdot \|_{B,\tau_2})$. Now, since $X_1$ and $X_2$ are both Banach spaces, it follows that $(y_n)_{n \in \mathbb{N}}$ is convergent in $X_1$ and $(z_n)_{n \in \mathbb{N}}$ is convergent in $X_2$, respectively. This means that there exists $\bar{y}_0 \in X_1$ and $\bar{z}_0 \in X_2$, for which one has that $y_n \to \bar{y}_0$ and $z_n \to \bar{z}_0$ as $n \to \infty$. By convergence, for $c_1, c_2 > 0$, there exists $N_1 = N(c_1) \geq 0$ and $N_2 = N_2(c_2) \geq 0$, such that for every $n \geq N_1$ and $n \geq N_2$ simultaneously, one has that $\| y_n - \bar{y}_0 \|_{B,\tau_1} < c_1$ and $\| z_n - \bar{z}_0 \|_{B,\tau_2} < c_2$. So, it follows that $(c_1,c_2) - d((y_n,z_n),(\bar{y}_0,\bar{z}_0)) \in \ int(P)$, which leads to our desired conclusion. \\
Now, since we have showed that $(\bar{X},d)$ is a complete cone metric space over the Banach algebra $\mathbb{R}^2$, then we have that $T : \bar{X} \to \bar{X}$, with $T = (T_1,T_2)$, where $T_1 : \bar{X} \to X_1$ and $T_2 : \bar{X} \to X_2$. \\
Now, let $\Delta := \lbrace (x,y,z) \ / \ |x-a| \leq \bar{a}, \ |y-\beta| \leq \bar{\beta}, \ |z-\gamma| \leq \bar{\gamma} \rbrace \subset D$. Since the mappings $f$ and $g$ are continuous on the compact $\Delta$, then there exists $M_f,M_g \geq 0$, such that $|f(x,y,z)| \leq M_f$ and $|g(x,y,z)| \leq M_g$, for each $(x,y,z) \in \Delta$. Furthermore, let
\begin{equation*}
\begin{cases}
& h_1 := \min \Big\lbrace \bar{a}, \dfrac{\bar{\beta}}{M_f} \Big\rbrace \\
& h_2 := \min \Big\lbrace \bar{a}, \dfrac{\bar{\gamma}}{M_g} \Big\rbrace.
\end{cases}
\end{equation*}
Also, define : 
\begin{align*}
& S := \lbrace (y,z) \in C([a-h_1,a+h_1], \| \cdot \|_{B,\tau_1}) \times C([a-h_2,a+h_2], \| \cdot \|_{B,\tau_2}) \ / \ \| y - \beta \|_{B,\tau_1} \leq \bar{\beta} \\
& \text{ and } \| z - \gamma \|_{B,\tau_2} \leq \bar{\gamma} \rbrace.
\end{align*}
Now, we shall show that $T : S \to S$, i.e. taking $w \in S$, we show that $Tw \in S$. For this, consider $w=(y,z) \in S$ and $Tw=(T_1w,T_2w)  \in S$. This means that we need to show
\begin{equation*}
\begin{split}
\begin{cases}
& T_1 w \in C([a-h_1,a+h_1], \| \cdot \|_{B,\tau_1}) \\
& T_2 w \in C([a-h_2,a+h_2], \| \cdot \|_{B,\tau_2})
\end{cases}
, \ \text{ with }
\begin{cases}
& \| T_1 w - \beta \|_{B,\tau_1} \leq \bar{\beta} \\
& \| T_2 w - \gamma \|_{B,\tau_2} \leq \bar{\gamma} 
\end{cases}
\end{split}    
\end{equation*}
The last chain of inequalities is equivalent to
\begin{equation*}
\begin{cases}
& \max_{x \in [a-h_1,a+h_1]} |T_1(y,z)(x) - \beta| e^{-\tau_1(x-(a-\bar{a}))} \leq \bar{\beta} \\
& \max_{x \in [a-h_2,a+h_2]} |T_2(y,z)(x) - \gamma| e^{-\tau_2(x-(a-\bar{a}))} \leq \bar{\gamma}
\end{cases}    
\end{equation*}
So, it follows that
\begin{align*}
& |T_1(y,z)(x)-\beta| = \Bigg| \beta + \int\limits_{a}^{x} f(s,y(s),z(s)) \ ds - \beta \Bigg| \leq \int\limits_{a}^{x} |f(s,y(s),z(s))| \ ds \leq M_f |x-a| \leq M_f h_1 \leq \bar{\beta}.  \\
& |T_2(y,z)(x)-\gamma| = \Bigg| \gamma + \int\limits_{a}^{x} g(s,y(s),z(s)) \ ds - \gamma \Bigg| \leq \int\limits_{a}^{x} |g(s,y(s),z(s))| \ ds \leq M_g |x-a| \leq M_g h_2 \leq \bar{\gamma}.   
\end{align*}
For $T=(T_1,T_2) : S \to S$, we shall show that $T$ is a contraction with respect to the cone metric $d$ over $\mathbb{R}^2$. First of all, we show that $S \subset \bar{X}$ is complete in the setting of $d$, i.e. $(S,d)$ is a complete cone metric space over the same Banach algebra as before. For this, let's consider the sequence $(x_n)_{n \in \mathbb{N}}$, with $x_n=(y_n,z_n)$ and $x_n \in S$ for each $n \in \mathbb{N}$, such that $(x_n)_{n \in \mathbb{N}}$ is Cauchy. So, we shall show that $(x_n)_{n \in \mathbb{N}}$ is convergent with respect to $d$. As we have done with the proof of completeness of $\bar{X}$, we find that $(y_n)_{n \in \mathbb{N}}$ is convergent in $S_1$ and $(z_n)_{n \in \mathbb{N}}$ is convergent in $S_2$, where :
\begin{align*}
& S_1 := \Bigg\lbrace y \in C([a-h_1,a+h_1], \| \cdot \|_{B,\tau_1}) \ / \ \| y - \beta \|_{B,\tau_1} \leq \bar{\beta} \Bigg\rbrace \ , \\
& S_2 := \Bigg\lbrace y \in C([a-h_2,a+h_2], \| \cdot \|_{B,\tau_2}) \ / \ \| z - \gamma \|_{B,\tau_2} \leq \bar{\gamma} \Bigg\rbrace \ , \\
\end{align*}
We have used the fact that $S_1$ is a closed subset of $X_1$ and $X_1$ is complete, then $S_1$ is also complete with respect to $\| \cdot \|_{B,\tau_1}$. In a similar way, since $S_2$ is closed and $X_2$ is complete, then $S_2$ is complete with respect to $\| \cdot \|_{B,\tau_2}$. Then, it is easy to see that $(S,d)$ is complete with respect to the Banach algebra $\mathbb{R}^2$. \\
On the other hand, for $T = (T_1,T_2) : S \to S$, we show that $T$ is a cone contraction, i.e. there exists $\alpha = (\alpha_1,\alpha_2)$, with $\alpha_1, \alpha_2 \geq 0$, such that $\rho(\alpha) < 1$ and for each $w,\bar{w} \in S$, one has that $d(Tw,T\bar{w}) \preceq \alpha \cdot d(w,\bar{w})$. For the simplicity of results, we can take $\alpha_2 = 0$, because we can work with the definition of contraction with respect to the cone metric with the assumption that $\alpha \succeq \theta$ and not $\alpha \gg \theta$. Then, it follows that
\begin{align*}
(\alpha_1,0) \cdot d((y_1,z_1),(y_2,z_2)) - d((T_1(y_1,z_1),T_2(y_1,z_1)),(T_1(y_2,z_2),T_2(y_2,z_2))) \succeq (0,0).    
\end{align*}
This is equivalent to
\begin{equation*}
\begin{split}
& \| T_1(y_1,z_1) - T_1(y_2,z_2) \|_{B,\tau_1} \leq \alpha_1 \| y_1 - y_2 \|_{B,\tau_1} \\
& \| T_2(y_1,z_1) - T_2(y_2,z_2) \|_{B,\tau_2} \leq \alpha_1 \| z_1 - z_2 \|_{B,\tau_2}
\end{split}    
\end{equation*}
For example we have that
\begin{align*}
& \| T_1(y_1,z_1) - T_1(y_2,z_2) \|_{B,\tau_1}  = \max_{x \in S_1} |T_1(y_1,z_1)(x)-T_1(y_2,z_2)(x)| e^{-\tau_1 (x-(a-\bar{a}))} \ . 
\end{align*}
Furthermore, we get that
\begin{equation*}
\begin{split}
|T_1(y_1,z_1)(x)-T_1(y_2,z_2)(x)| & \leq \int\limits_{a}^{x} | f(s,y_1(s),z_1(s)) - f(s,y_2(s),z_2(s)) | \ ds \\
& \leq L_1 \int\limits_{a}^{x} |y_1(s)-y_2(s)| \ ds \\
& = L_1 \int\limits_{a}^{x} |y_1(s)-y_2(s)| e^{-\tau_1(s-(a-\bar{a}))} e^{\tau_1(s-(a-\bar{a}))} \ ds \\
& \leq L_1 \| y_1 - y_2 \|_{B,\tau_1} \int\limits_{a}^{x} e^{\tau_1(s-(a-\bar{a}))} \ ds \\
& = L_1 \| y_1 - y_2 \|_{B,\tau_1} \cdot \dfrac{e^{\tau_1(x-(a-\bar{a}))}-1}{\tau_1} \\
& \leq \dfrac{L_1}{\tau_1} \| y_1 - y_2 \|_{B,\tau_1} e^{\tau_1(x-(a-\bar{a}))} \ .
\end{split}    
\end{equation*}
So, we obtain that
\begin{align*}
& |T_1(y_1,z_1)(x)-T_1(y_2,z_2)(x)| e^{-\tau_1(x-(a-\bar{a}))} \leq \dfrac{L_1}{\tau_1} \| y_1 - y_2 \|_{B,\tau_1} \ .
\end{align*}
Taking the maximum by $x \in S_1$, it follows that $\alpha_1 \geq \dfrac{L_1}{\tau_1}$. \\
In a similar way, for the case when we are dealing with $T_2$, it follows that
\begin{align*}
& |T_2(y_1,z_1)(x)-T_2(y_2,z_2)(x)| e^{-\tau_2(x-(a-\bar{a}))} \leq \dfrac{L_2}{\tau_2} \| z_1 - z_2 \|_{B,\tau_2} \ .
\end{align*}
So $\alpha_1 \geq \dfrac{L_2}{\tau_2}$. This means that we can choose $\tau_1,\tau_2 > 0$, such that
\begin{align*}
\alpha_1  = \max \Big\lbrace \dfrac{L_1}{\tau_1}, \dfrac{L_2}{\tau_2} \Big\rbrace < 1 \ ,   
\end{align*}
because one can see that $\rho((\alpha_1,0)) = \alpha_1 < 1$ and applying Theorem 2.9 of \cite{HuangRadenovic} with $k_2 = k_3 = k_4 = k_5 = 0$, $k_1 = \alpha_1$ and $g$ the identity mapping, then the proof is over.
\end{proof}

Finally, we present our last result of this section regarding the convergence of a sequence of solutions for a family of nonlinear differential systems. Furthermore, the following theorem is crucial, in the sense that we extend the application of S.B. Nadler Jr. from \cite{Nadler}.

\begin{theorem}\label{T4.3}
Let $D$, $\Delta$ and $I$ as in the previous theorem. Consider the following nonlinear systems of differential equations :
\begin{equation}\label{EQ4.6}
\begin{cases}
& y^\prime (x) = f_n(x,y(x),z(x)) \\
& z^\prime (x) = g_n(x,y(x),z(x)) \\
& y(a) = \beta \\
& z(a) = \gamma
\end{cases}
, \ \text{ for each } n \geq 1 \text{ and } x \in I.
\end{equation}
Furthermore, consider another system of differential equations :
\begin{equation}\label{EQ4.7}
\begin{cases}
& y^\prime (x) = f(x,y(x),z(x)) \\
& z^\prime (x) = g(x,y(x),z(x)) \\
& y(a) = \beta \\
& z(a) = \gamma
\end{cases}
\ ,
\end{equation}
where the functions $f_n,g_n,f$ and $g$ are continuous on $D$. Moreover, suppose the following assumptions are satisfied :
\begin{equation*}
\begin{split}
& (1) \text{ there exists } M \in (0,1) ,\text{ such that for all } n \in \mathbb{N} \text{ there exists } k_n, h_n, k, h \geq 0, \text{ for which one has } : \\
& \begin{cases}
& |f_n(x,y,z)-f_n(x,\bar{y},\bar{z})| \leq k_n |y-\bar{y}| \\
& |g_n(x,y,z)-g_n(x,\bar{y},\bar{z})| \leq h_n |z-\bar{z}|
\end{cases}
\ , \text{ for every } (x,y,z) \text{ and } (x,\bar{y},\bar{z}) \text{ from } D. \\
& (2) \begin{cases}
& |f(x,y,z)-f(x,\bar{y},\bar{z})| \leq k |y-\bar{y}| \\
& |g(x,y,z)-g(x,\bar{y},\bar{z})| \leq h |z-\bar{z}|
\end{cases}
\ , \text{ for every } (x,y,z) \text{ and } (x,\bar{y},\bar{z}) \text{ from } D, \\
& \ \text{ with } k_n, h_n, k, h > 0, \text{ for each } n \in \mathbb{N}, \text{ satisfying } \max \lbrace k_n, h_n, k, h \rbrace \leq M < 1. \\
& (3) \ \text{ The pointwise convergence of the families } (f_n) \text{ and } (g_n), \text{ i.e. } \\
& \begin{cases}
& f_n \xrightarrow{p} f \\
& g_n \xrightarrow{p} g 
\end{cases}
\ , \text{ i.e. } 
\begin{cases}
& \lim\limits_{n \to \infty} f_n(x,y,z) = f(x,y,z) \\
& \lim\limits_{n \to \infty} g_n(x,y,z) = g(x,y,z)
\end{cases}
\ , \text{ for every } (x,y,z) \in D. \\
& (4) \text{ If the mappings } f_n \text{ and } g_n \text{ are bounded, for each } n \in \mathbb{N} \text{ by } M_n \text{ and } \tilde{M}_n \text{ respectively,} \\
& \text{then there exist } M_f,M_g \geq 0, \text{ such that } M_n \leq M_f \text{ and } \tilde{M}_n \leq M_g, \text{ for each } n \in \mathbb{N}.
\end{split}
\end{equation*}
If $(y_n,z_n)$ is the unique solution of \ref{EQ4.6} and $(y,z)$ is the unique solution of \ref{EQ4.7}, then
\begin{equation*}
\begin{split}
\begin{cases}
& y_n \xrightarrow{u} y \\
& z_n \xrightarrow{u} z
\end{cases}
\ , \text{ i.e. }
\begin{cases}
& \lim\limits_{n \to \infty} \| y_n - y \|_{B,\tau_1} = 0 \\
& \lim\limits_{n \to \infty} \| z_n - z \|_{B,\tau_2} = 0 
\end{cases}
\ .
\end{split}    
\end{equation*}
\end{theorem}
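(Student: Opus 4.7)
The plan is to recast the theorem as a statement about pointwise convergence of a sequence of contractions on a complete cone metric space over the Banach algebra $\mathbb{R}^2$ from Example \ref{E2.7}, and then to invoke (the extended form of) Theorem \ref{T2.10} in exactly the way it was applied in Theorem \ref{T4.2}. First I would convert the systems \ref{EQ4.6} and \ref{EQ4.7} into integral form and define operators $T_n=(T_{n,1},T_{n,2})$ and $T=(T_1,T_2)$ on the product space $\bar X = C(I,\|\cdot\|_{B,\tau_1}) \times C(I,\|\cdot\|_{B,\tau_2})$ exactly as in \ref{EQ4.5}. Using the uniform bounds $M_f,M_g$ from hypothesis (4) (rather than the individual bounds $M_n,\tilde M_n$) in the definition of $h_1,h_2$, I would select a single closed subset $S\subset \bar X$ that is invariant under every $T_n$ and under $T$, and which inherits the structure of a complete cone metric space $(S,d)$ over $\mathbb{R}^2$ just as in Theorem \ref{T4.2}.

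Next I would establish uniform contractivity. The computation already carried out in Theorem \ref{T4.2} yields, for each $n$, the bound $d(T_n w,T_n\bar w) \preceq \alpha_n \cdot d(w,\bar w)$ with $\alpha_n = (\max\{k_n/\tau_1,h_n/\tau_2\},0)$, and similarly $d(Tw,T\bar w)\preceq \alpha_\infty d(w,\bar w)$ with $\alpha_\infty = (\max\{k/\tau_1, h/\tau_2\},0)$. By hypothesis (2) all of $k_n,h_n,k,h$ are bounded above by $M<1$, so by choosing $\tau_1,\tau_2 > 1$ large enough I can arrange that $\alpha_n \preceq (M_0,0) =: \alpha$ with $\rho(\alpha)=M_0<1$ uniformly in $n$, and likewise $\alpha_\infty \preceq \alpha$. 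As noted in the proof of Theorem \ref{T4.1}, Theorem \ref{T2.10} remains valid when the operators $T_n$ carry different contraction coefficients $\alpha_n$ provided they are uniformly bounded by some $\alpha\in P$ with $\rho(\alpha)<1$; this observation is exactly what is needed here.

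The remaining step, which I expect to be the main obstacle, is to verify that $T_n \xrightarrow{p} T$ in the sense of Definition \ref{D2.1} on $(S,d)$. Fix $w=(y,z)\in S$; I must show that both real sequences $\|T_{n,1}w - T_1 w\|_{B,\tau_1}$ and $\|T_{n,2}w - T_2 w\|_{B,\tau_2}$ tend to $0$, since this is equivalent to $d(T_n w,Tw)$ being a $c$-sequence in $\mathbb{R}^2$. Because the Bielecki weight is bounded by $1$ on $I$, it suffices to prove uniform convergence of $\int_a^x f_n(s,y(s),z(s))\,ds \to \int_a^x f(s,y(s),z(s))\,ds$ on $I$, and likewise for $g_n,g$. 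Here assumption (4), namely the uniform bound $|f_n(s,y(s),z(s))| \leq M_f$ together with the pointwise convergence $f_n\to f$ from assumption (3), lets me apply the dominated convergence theorem on the compact interval $I$, yielding
\begin{equation*}
\sup_{x\in I}\bigl|\textstyle\int_a^x (f_n-f)(s,y(s),z(s))\,ds\bigr| \;\leq\; \int_{a-\bar a}^{a+\bar a} |f_n-f|(s,y(s),z(s))\,ds \;\longrightarrow\; 0,
\end{equation*}
and an identical argument for the $g$-component. This is the only place where assumption (4) plays a genuine role, and it is what makes the pointwise hypothesis (3) strong enough for this cone-metric framework.

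Putting these three ingredients together, the hypotheses of the variant of Theorem \ref{T2.10} described above are satisfied on the complete cone metric space $(S,d)$ over $\mathbb{R}^2$: each $T_n$ is a contraction with coefficient uniformly dominated by $\alpha$, the limit $T$ is a contraction with coefficient $\alpha_\infty \preceq \alpha$, pointwise convergence $T_n\xrightarrow{p} T$ has just been verified, and each operator has a unique fixed point in $S$, namely the solutions $(y_n,z_n)$ of \ref{EQ4.6} and $(y,z)$ of \ref{EQ4.7} furnished by Theorem \ref{T4.2}. The conclusion of Theorem \ref{T2.10} is then that $(d((y_n,z_n),(y,z)))_{n\in\mathbb{N}}$ is a $c$-sequence in $\mathbb{R}^2$, which unpacks precisely as $\|y_n-y\|_{B,\tau_1}\to 0$ and $\|z_n-z\|_{B,\tau_2}\to 0$, completing the proof.
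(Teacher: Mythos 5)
Your proposal is correct and follows the same overall architecture as the paper's proof: integral reformulation, the operators $T_n=(T_{n,1},T_{n,2})$ and $T=(T_1,T_2)$ on the Bielecki product space, the invariant set $S$ built from the uniform bounds $M_f,M_g$ of hypothesis (4), uniform domination of the contraction coefficients $\alpha_n$ by a single $\alpha\in P$ with $\rho(\alpha)<1$ (using $\max\{k_n,h_n,k,h\}\leq M<1$ and a suitable choice of $\tau_1,\tau_2$), and finally the variant of Theorem \ref{T2.10} with non-constant but uniformly bounded coefficients, exactly as set up in the proof of Theorem \ref{T4.1}. Where you genuinely diverge is in the technical core, the verification that $T_n\xrightarrow{p}T$ on $(S,d)$: the paper first applies dominated convergence pointwise in $x$ to get $|T_{n,1}(y,z)(x)-T_1(y,z)(x)|\to 0$ for each fixed $x$, then proves uniform equicontinuity of the family $(T_{n,1}(y,z))_n$ via the bound $M_f$, and invokes an Arzel\`a--Ascoli-type argument to upgrade pointwise convergence to uniform convergence in $x$ before passing to the Bielecki norm. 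You instead observe that the Bielecki weight is at most $1$ and that
$\sup_{x\in I}\bigl|\int_a^x (f_n-f)(s,y(s),z(s))\,ds\bigr|\leq\int_{a-\bar a}^{a+\bar a}|f_n-f|(s,y(s),z(s))\,ds$,
so a single application of the dominated convergence theorem (with dominating constant $2M_f$) kills the supremum directly. This is a cleaner and more elementary route that makes the equicontinuity and Arzel\`a--Ascoli steps unnecessary, while isolating hypothesis (4) as exactly the ingredient needed for the domination; both arguments are valid, but yours shortens the proof's most involved passage.
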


\begin{proof}
The first order system of differential equations \ref{EQ4.6} can be written under an integral form, as follows : 
\begin{equation}\label{EQ4.8}
\begin{split}
\begin{cases}
& y (x) = \beta + \int\limits_{a}^{x} f_n(s,y(s),z(s)) \ ds \\
& z (x) = \gamma + \int\limits_{a}^{x} g_n(s,y(s),z(s)) \ ds
\end{cases}    
\ .
\end{split}
\end{equation}
Furthermore, the system \ref{EQ4.7} can be written also under an integral form, i.e.
\begin{equation}\label{EQ4.9}
\begin{split}
\begin{cases}
& y (x) = \beta + \int\limits_{a}^{x} f(s,y(s),z(s)) \ ds \\
& z (x) = \gamma + \int\limits_{a}^{x} g(s,y(s),z(s)) \ ds
\end{cases}    
\ .
\end{split}
\end{equation}
Similar to the proof of Theorem \ref{T4.2}, we define the operators $T_n = (T_{n,1},T_{n,2})$ and $T=(T_1,T_2)$, such that
\begin{equation*}
\begin{split}
\begin{cases}
& T_{n,1}(y,z)(x) = \beta + \int\limits_{a}^{x} f_n(s,y(s),z(s)) \ ds \\
& T_{n,2}(y,z)(x) = \gamma + \int\limits_{a}^{x} g_n(s,y(s),z(s)) \ ds
\end{cases}
\ \text{ and }
\begin{cases}
& T_1(y,z)(x) = \beta + \int\limits_{a}^{x} f(s,y(s),z(s)) \ ds \\
& T_2(y,z)(x) = \gamma + \int\limits_{a}^{x} g(s,y(s),z(s)) \ ds
\end{cases}
\end{split}
\end{equation*}
With the same notations as in the proof of Theorem \ref{T4.2}, we define $X_1 := C(I,\| \cdot \|_{B,\tau_1})$, $X_2 := C(I,\| \cdot \|_{B,\tau_2})$ and $X := X_1 \times X_2$, respectively. Furthermore, one can consider the compact set $\Delta$ as in the proof of the previous theorem and then there exists $M_n$ and $\tilde{M}_n$ for each $n \in \mathbb{N}$ and so, by property $(4)$ we find $M_f$ and $M_g$, such that $|f_n(x,y,z)| \leq M_n \leq M_f$ and $|g_n(x,y,z)| \leq \tilde{M}_n \leq M_g$, for every $(x,y,z) \in \Delta \subset D$. Then, we can define $h_1 := \min \Big\lbrace \bar{a}, \dfrac{\bar{\beta}}{M_f} \Big\rbrace$ and $h_2 := \min \Big\lbrace \bar{a}, \dfrac{\bar{\gamma}}{M_g} \Big\rbrace$. Furthermore, based on $h_1$ and $h_2$, one can define $S_1,S_2$ and $S$ as in the proof of the previous theorem. Then, by assumptions $(1)$ and $(2)$, applying Theorem \ref{T4.2}, we get that 
\begin{equation*}
\begin{split}
& \text{ there exists and is unique } (y_n,z_n) \text{ solution for the system } \ref{EQ4.6} \\
& \text{ there exists and is unique } (y,z) \text{ solution for the system } \ref{EQ4.7} \ ,
\end{split}    
\end{equation*}
where $y_n$ and $y$ are from $S_1$ and $z_n,z$ are from $S_2$. Also, we observe that $T_n, T : S \to S$, $T_{n,1} : S \to S_1$ and $T_{n,2} : S \to S_2$. So, taking $(y,z) \in S$ an arbitrary element (we use the same notation as the unique solution of the system \ref{EQ4.7} since it lies no confusion) and $x \in [a-h_1,a+h_1]$, for each $n \geq 1$, it follows that
\begin{align*}
\left[ T_{n,1}(y,z)(x) - T_1(y,z)(x) \right] = \int\limits_{a}^{x} \left[ f_n(t,y(t),z(t))-f(t,y(t),z(t)) \right] \ dt     
\end{align*}
Now, since $f_n \xrightarrow{p} f$ and $|f_n| \leq M_f$, by Lebesgue dominated convergence theorem, it follows that 
\begin{align*}
\lim\limits_{n \to \infty} |T_{n,1}(y,z)(x)-T_1(y,z)(x)| = 0 \ .    
\end{align*}
This is equivalent to : for every $\varepsilon_1 > 0$ and for every $x \in [a-h_1,a+h_1]$, there exists $N_1 \geq 0$, such that for all $n \geq N_1$, one has $|T_{n,1}(y,z)(x)-T_1(y,z)(x)| < \varepsilon_1$.  \\
In a similar way, for $T_{n,2}$, we get 
\begin{align*}
\left[ T_{n,2}(y,z)(x) - T_2(y,z)(x) \right] = \int\limits_{a}^{x} \left[ g_n(t,y(t),z(t))-g(t,y(t),z(t)) \right] \ dt     
\end{align*}
Now, since $g_n \xrightarrow{p} g$ and $|g_n| \leq M_g$, by Lebesgue dominated convergence theorem, it follows that 
\begin{align*}
\lim\limits_{n \to \infty} |T_{n,2}(y,z)(x)-T_2(y,z)(x)| = 0 \ .    
\end{align*}
This is equivalent to : for every $\varepsilon_2 > 0$ and for every $x \in [a-h_2,a+h_2]$, there exists $N_2 \geq 0$, such that for all $n \geq N_2$, one has $|T_{n,2}(y,z)(x)-T_2(y,z)(x)| < \varepsilon_2$. \\
This means that $T_{n,1}(y,z) \xrightarrow{p} T_1(y,z)$ and $T_{n,2}(y,z) \xrightarrow{p} T_2(y,z)$, where we have the usual pointwise convergence. Furthermore, we show that the family $(T_{n,1}(y,z))$ is uniformly equicontinuous in the classical sense, i.e. for every $\varepsilon_1 > 0$, there exists $\delta_1 = \delta_1 (\varepsilon_1) > 0$, such that for each $n \in \mathbb{N}$ and for every $x,\bar{x} \in [a-h_1,a+h_1]$ satisfying $d(x,\bar{x}) < \delta_1$, we must have that $| T_{n,1}(y,z)(x) - T_{n,1}(y,z)(\bar{x}) | < \varepsilon_1$. Moreover, since $| T_{n,1}(y,z)(x) - T_{n,1}(y,z)(\bar{x}) | \leq M_f |x-\bar{x}| < \delta_1 M_f$, we can easily choose $\delta_1 := \dfrac{\varepsilon_1}{M_f}$. \\
In a similar manner, we show that the family $(T_{n,2}(y,z))$ is also uniformly equicontinuous in the classical sense, i.e. for every $\varepsilon_2 > 0$, there exists $\delta_2 = \delta_2 (\varepsilon_2) > 0$, such that for each $n \in \mathbb{N}$ and for every $x,\bar{x} \in [a-h_2,a+h_2]$ satisfying $d(x,\bar{x}) < \delta_2$, we must have that $| T_{n,2}(y,z)(x) - T_{n,2}(y,z)(\bar{x}) | < \varepsilon_2$. Moreover, since $| T_{n,2}(y,z)(x) - T_{n,2}(y,z)(\bar{x}) | \leq M_g |x-\bar{x}| < \delta_2 M_g$, we can easily choose $\delta_2 := \dfrac{\varepsilon_2}{M_g}$. At the same time, we have the following : 
\begin{equation*}
\begin{split}
\begin{cases}
& T_{n,1}(y,z) \xrightarrow{p} T_1(y,z) \\
& (T_{n,1}(y,z)) \text{ uniformly equicontinuous, so it is also equicontinuous. }
\end{cases}
\end{split}    
\end{equation*}
and by Arzela-Ascoli theorem, we find that $T_{n,1}(y,z) \xrightarrow{u} T_1(y,z)$, for each $(y,z) \in S$ where the uniform convergence is on $I_1 := [a-h_1,a+h_1]$. \\
In a similar way, we find that
\begin{equation*}
\begin{split}
\begin{cases}
& T_{n,2}(y,z) \xrightarrow{p} T_2(y,z) \\
& (T_{n,2}(y,z)) \text{ equicontinuous }
\end{cases}
\end{split}    
\end{equation*}
and by Arzela-Ascoli theorem, we find that $T_{n,2}(y,z) \xrightarrow{u} T_2(y,z)$, for each $(y,z) \in S$ where the uniform convergence is on $I_2 := [a-h_2,a+h_2]$. \\
This equivalent to the fact that for each $(y,z) \in S$ and for $\varepsilon_1 > 0$, there exists $N_1 = N_1(\varepsilon_1,y,z) \geq 0$, such that for every $n \geq N_1$, we have that $|T_{n,1}(y,z)(x)-T_{1}(y,z)(x)| < \varepsilon_1$. At the same time $|T_{n,1}(y,z)(x)-T_{1}(y,z)(x)| < \varepsilon_1$ implies that $|T_{n,1}(y,z)(x)-T_{1}(y,z)(x)| e^{-\tau_1 (x-(a-\bar{a}))} < \varepsilon_1$. So, taking the maximum when $x \in [a-h_1,a+h_1]$, we find that 
\begin{align*}
\| T_{n,1}(y,z) - T_{1}(y,z) \|_{B,\tau_1} \leq \varepsilon_1 \ .    
\end{align*}
So, this implies that
\begin{align*}
T_{n,1} \xrightarrow{p} T_1, \text{ where the pointwise convergence is on } S \ .    
\end{align*}
In an analogous way, we have that for each $(y,z) \in S$ and for $\varepsilon_2 > 0$, there exists $N_2 = N_2(\varepsilon_2,y,z) \geq 0$, such that for every $n \geq N_2$, we have that $|T_{n,2}(y,z)(x)-T_{2}(y,z)(x)| < \varepsilon_2$. At the same time $|T_{n,2}(y,z)(x)-T_{2}(y,z)(x)| < \varepsilon_2$ implies that $|T_{n,2}(y,z)(x)-T_{2}(y,z)(x)| e^{-\tau_2 (x-(a-\bar{a}))} < \varepsilon_2$. So, taking the maximum when $x \in [a-h_2,a+h_2]$, we find that 
\begin{align*}
\| T_{n,2}(y,z) - T_{2}(y,z) \|_{B,\tau_2} \leq \varepsilon_2 \ .    
\end{align*}
So, this implies that
\begin{align*}
T_{n,2} \xrightarrow{p} T_2, \text{ where the pointwise convergence is on } S \ .    
\end{align*}
Now, it is time to show that $T_n \xrightarrow{p} T$ with respect to the Banach algebra $\mathbb{R}^2$. For example, taking $c=(\varepsilon_1,\varepsilon_2) \in \mathbb{R}^2$ arbitrary, with $\varepsilon_1,\varepsilon_2 > 0$ and taking $(y,z) \in S$ also arbitrary, then there exists $N = \max \lbrace N_1,N_2 \rbrace$ that depends on $c$, $y$ and $z$, with $N \geq 0$, such that for all $n \geq N$, we have that
\begin{equation*}
\begin{split}
\begin{cases}
& \| T_{n,1}(y,z) - T_1(y,z) \|_{B,\tau_1} \leq \varepsilon_1 \\
& \| T_{n,2}(y,z) - T_2(y,z) \|_{B,\tau_2} \leq \varepsilon_2 
\end{cases}
\ .
\end{split}    
\end{equation*}
This means that 
\begin{equation*}
\begin{split}
\begin{cases}
& \left( \| T_{n,1}(y,z) - T_1(y,z) \|_{B,\tau_1}, \| T_{n,2}(y,z) - T_2(y,z) \|_{B,\tau_2} \right) \preceq (\varepsilon_1,\varepsilon_2) \Leftrightarrow \\
& d(T_n(y,z),T(y,z)) = d((T_{n,1}(y,z),T_{n,2}(y,z)),(T_1(y,z),T_2(y,z))) \preceq c
\end{cases}
\ .
\end{split}    
\end{equation*}
This means that $T_n \xrightarrow{p} T$, where the pointwise convergence is on $S$ and is in the setting of the given Banach algebra. On the other hand, applying Theorem \ref{T4.2}, since $T_n$ and $T$ are cone self-contractions on $S$ and also applying Theorem \ref{T2.10}, we get the desired conclusion. Finally, we make the crucial remark regarding the method used in order to apply the already mentioned theorems. The idea behind it is very similar to the one used in the proof of Theorem \ref{T4.1}. For the integral operators from our theorem, we have the contraction cone elements are $\alpha_n = (\alpha_n^1,0)$ and $\alpha = (\alpha_0^1,0)$. Furthermore, following the proof of the previous theorem, they must satisfy $\alpha_n^1 = \max \lbrace \dfrac{k_n}{\tau_1}, \dfrac{h_n}{\tau_2} \rbrace < 1$ and $\alpha_0^1 = \max \lbrace \dfrac{k}{\tau_1}, \dfrac{h}{\tau_2} \rbrace < 1$, respectively. For simplicity, taking $\tau_1 < \tau_2$, we observe that we get $\max \lbrace k_n, h_n \rbrace < \tau_1$ and $\max \lbrace k,h \rbrace < \tau_1$. From our assumptions, we know that $\max \lbrace k_n,h_n,k,h \rbrace \leq M$, so we can take $\tau_1$ to be greater than the fixed positive constant $M < 1$ and now the proof is complete.
\end{proof}



\end{document}